\newtheorem{Theorem}{\bf Theorem}
\newtheorem{lemma}[Theorem]{\bf Lemma}
\newtheorem{proposition}[Theorem]{\bf Proposition}
\newtheorem{corollary}[Theorem]{\bf Corollary}
\newtheorem{definition}[Theorem]{\bf Definition}
\newtheorem{theorem}[Theorem]{\bf Theorem}
\def\qed{\hfill$\Box$}
\def\scfig #1 #2 {\resizebox{#2}{!}{\includegraphics{#1}}}
\newcommand{\be}{\begin{equation}}
\newcommand{\ee}{\end{equation}}
\def\hpic #1 #2 {\mbox{$\begin{array}[c]{l} 
\epsfig{file=#1,height=#2}\end{array}$}}
\def\wpic #1 #2 {\mbox{$\begin{array}[c]{l} 
\epsfig{file=#1,width=#2}\end{array}$}}
\begin{document}

\title[Presentation of the spin planar algebra]{On a presentation of the spin planar algebra}

%\author{Sandipan De}
\author{Vijay Kodiyalam}
\address{The Institute of Mathematical Sciences, Chennai, India and Homi Bhabha National Institute, Mumbai, India}
\author{Sohan Lal Saini}
\address{The Institute of Mathematical Sciences, Chennai, India and Homi Bhabha National Institute, Mumbai, India}
\author{Sruthymurali}
\address{The Institute of Mathematical Sciences, Chennai, India and Homi Bhabha National Institute, Mumbai, India}
\author{V. S. Sunder}
\address{The Institute of Mathematical Sciences, Chennai, India and Homi Bhabha National Institute, Mumbai, India}
\email{vijay@imsc.res.in,slsaini@imsc.res.in,sruthym@imsc.res.in,sunder@imsc.res.in}
%
%%\thanks{The authors thank the referee for a very careful reading and detailed report.}
%
%\subjclass[2010]{Primary 16T05, 16S40, 46L37}
%%\date{January 1, 1994 and, in revised form, June 22, 1994.}
%
%
%
%%\keywords{Semisimple and cosemisimple Hopf algebra, Drinfeld double, basic construction, planar algebra}
%
%
\begin{abstract} We define a certain abstract planar algebra by generators
and relations, study various aspects of its structure,
% and a description of the action by
%all elementary tangles - 
and then identify it with Jones' spin planar algebra.
\end{abstract}
\maketitle

%\section{Multiunitaries in planar algebras}
%Let $P$ be a $*$-planar algebra. For  $n \in {\mathbb N},\epsilon \in \{\pm 1\}$ and $T \subseteq \{0,1,\cdots,n-1\}$,
%an $(n,\epsilon,T)$-multiunitary in $P$ is an element $u \in P_{(n,\epsilon)}$ so that for each $t \in T$, $Z_{R_{n,\epsilon}^t}(u)$ is a
%unitary element in $P_{(n,(-1)^t\epsilon)}$. Here, $R_{n,\epsilon}^t$ denotes the $t^{th}$ power of the $(n,\epsilon)$-rotation tangle $R_{n,\epsilon}$, which, to fix notation,  we illustrate below for $(n,\epsilon)=(3,+)$.
%
%\begin{figure}[!h]
%\begin{center}
%\resizebox{3.0cm}{!}{\includegraphics{rotation.eps}}
%\end{center}
%\caption{The rotation tangle $R_{(3,+)}$}
%\label{fig:rotation}
%\end{figure}

%\section{The planar algebra of a set and of a Hilbert space}

Our goal in this note is to exhibit a presentation - a skein theory - for a very simple planar algebra,
the spin planar algebra, which is well known from the very first paper \cite{Jns1999}
of Jones on planar algebras. Our technique is to define a certain abstract planar algebra by generators
and relations, carefully study various aspects of its structure - including explicit bases for its vector spaces -
% and a description of the action by
%all elementary tangles - 
and then identify it with Jones' spin planar algebra.

We will assume throughout that the reader is familiar with planar algebras. Planar algebras are collections of vector spaces with an action by the operad of planar tangles. However, since the notion of planar algebras has been evolving since its definition in \cite{Jns1999}, to fix notations and definitions for the version
of planar algebras that we use here, we refer to \cite{DeKdy2018}. In particular, we use the version where the
vector spaces are indexed by $(k,\epsilon)$ with $k \in \{0,1,2,\cdots\}$ and $\epsilon \in \{\pm\}$ as opposed to the older version. The equivalence between these two is also shown in \cite{DeKdy2018}. Other
notions such as universal planar algebras are treated carefully in \cite{KdySnd2004} (in the context of the older planar algebras) and together these 3 papers cover all of the notions that are used here.

Let $S = \{s_1,s_2,\cdots,s_n\}$ be a finite set. We will define a planar algebra over ${\mathbb C}$ associated to this set.
Begin with the label set $L =  L_{(0,-)} = S$ equipped with the identity involution $*$. Consider the quotient $P=P(L,R)$ of the universal planar algebra $P(L)$ by the set $R$ of relations in Figures \ref{fig:modulus} and \ref{fig:mult} (where $\delta_{ij}$ denotes the Kronecker delta).

\begin{figure}[!h]
\begin{center}
\psfrag{v+}{\huge $v_+$}
\psfrag{v-}{\huge $s_i$}
\psfrag{u1}{\Huge $\displaystyle{\frac{1}{\mu(v_+)}}$}
\psfrag{u2}{\Huge $\displaystyle{\frac{1}{\mu(v_-)}}$}
\psfrag{text1}{\Huge $= \displaystyle{\sqrt{n}}$}
\psfrag{text2}{\Huge $= \displaystyle{\frac{1}{\sqrt{n}}}$}
\resizebox{12.0cm}{!}{\includegraphics{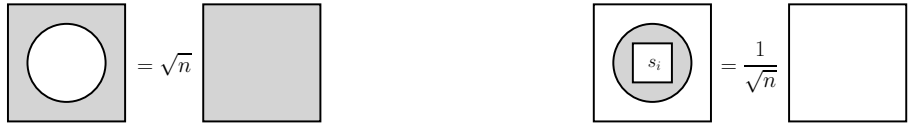}}
\end{center}
\caption{The white and black modulus relations}
\label{fig:modulus}
\end{figure}

\begin{figure}[!h]
\begin{center}
%\psfrag{zab}{\huge $\zeta a + b$}
%\psfrag{eq}{\huge $=$}
\psfrag{fi}{\Huge $s_i$}
\psfrag{fj}{\Huge $s_j$}
\psfrag{text2}{\Huge $\displaystyle{\sum\limits_i}$}
\psfrag{=}{\Huge $=\frac{1}{\sqrt{n}}$}
\psfrag{=dij}{\bf{\Huge $=\delta_{ij}$}}
\resizebox{11.0cm}{!}{\includegraphics{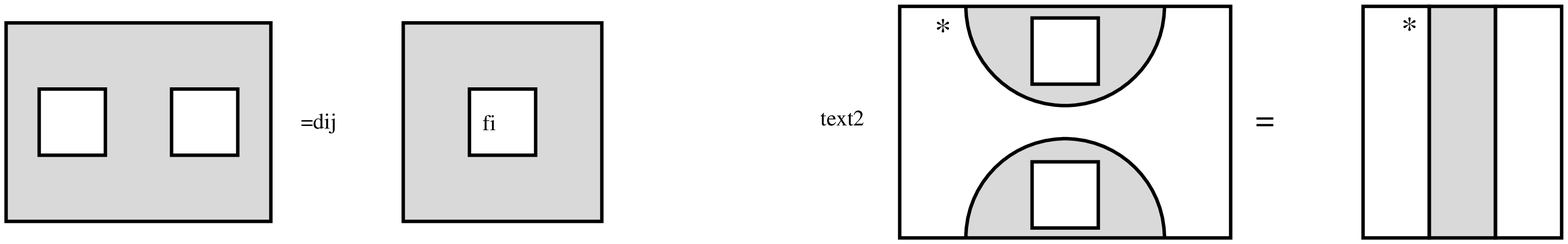}}
\end{center}
\caption{The multiplication relation and the black channel relation}
\label{fig:mult}
\end{figure}

%\begin{figure}[!h]
%\begin{center}
%%\psfrag{zab}{\huge $\zeta a + b$}
%%\psfrag{eq}{\huge $=$}
%\psfrag{fi}{\Huge $f_i$}
%\psfrag{=1byrp}{\Huge $= \sqrt{\frac{q}{p}}$}
%\psfrag{=1byrq}{\Huge $= \sqrt{\frac{p}{q}}$}
%\psfrag{ej}{\Huge $e_j$}
%\psfrag{ei}{\Huge $e_i$}
%\psfrag{=dij}{\bf{\Huge $=\delta_{ij}$}}
%\resizebox{12.0cm}{!}{\includegraphics{modulus.eps}}
%\end{center}
%\caption{The moduli relations in $P(L)_{(0,+)}$ and $P(L)_{(0,-)}$ respectively}
%\label{fig:modulus}
%\end{figure}

\begin{theorem}\label{thm:spin}
The planar algebra $P$ is a finite-dimensional $C^*$-planar algebra with modulus $\sqrt{n}$ and such that $dim(P_{(0,+)}) =1$ and
$dim(P_{(0,-)})=n$. For $k>0$, $dim(P_{(k,\pm)})=n^k$ with bases as in Figures \ref{fig:bases1} and \ref{fig:bases2} for $k$ even and odd  respectively.
\begin{figure}[!h]
\begin{center}
\psfrag{si1}{\Huge $s_{i_1}$}
\psfrag{si2}{\Huge $s_{i_2}$}
\psfrag{sik}{\Huge $s_{i_m}$}
\psfrag{sj1}{\Huge $s_{j_1}$}
\psfrag{sj2}{\Huge $s_{j_2}$}
\psfrag{sjk}{\Huge $s_{j_m}$}
\psfrag{sp}{\Huge $s_{p}$}
\psfrag{sq}{\Huge $s_{q}$}
\psfrag{sim}{\Huge $s_{i_m}$}
\psfrag{sikm1}{\Huge $s_{i_{m-1}}$}
\psfrag{sjkm1}{\Huge $s_{j_{2}}$}
%\psfrag{zab}{\huge $\zeta a + b$}
%\psfrag{eq}{\huge $=$}
\psfrag{fi}{\Huge $s_i$}
\psfrag{fj}{\Huge $s_j$}
\psfrag{cdots}{\Huge $\displaystyle{\cdots}$}
\psfrag{=}{\Huge $=$}
\psfrag{=dij}{\bf{\Huge $=\delta_{ij}$}}
\resizebox{9.0cm}{!}{\includegraphics{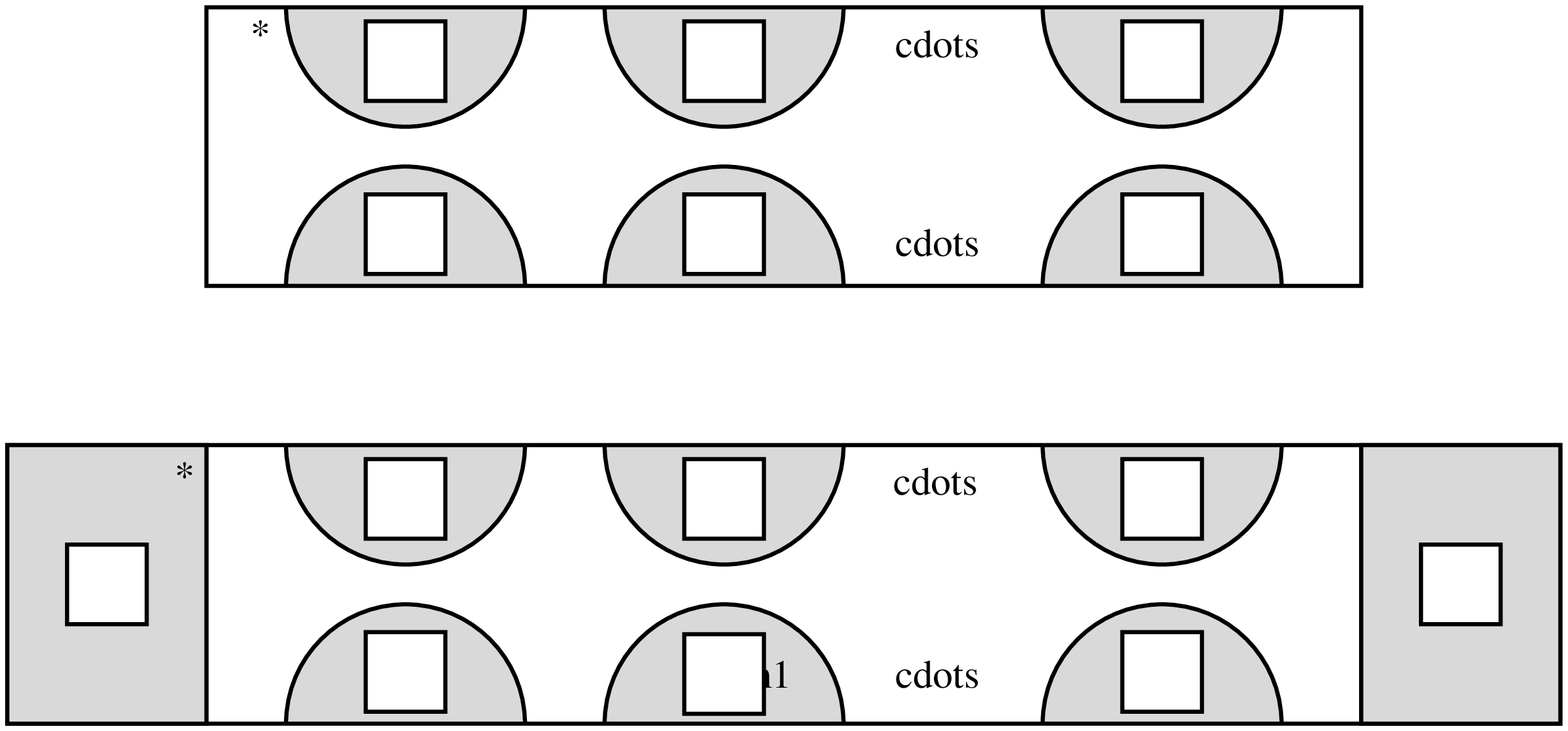}}
\end{center}
\caption{Bases ${\mathcal B}_{(2m,+)}$ for $m \geq 1$ and ${\mathcal B}_{(2m+2,-)}$ for $m \geq 0$}
\label{fig:bases1}
\end{figure}

\begin{figure}[!h]
\begin{center}
\psfrag{si1}{\Huge $s_{i_1}$}
\psfrag{si2}{\Huge $s_{i_2}$}
\psfrag{sik}{\Huge $s_{i_m}$}
\psfrag{sj1}{\Huge $s_{j_1}$}
\psfrag{sj2}{\Huge $s_{j_2}$}
\psfrag{sjk}{\Huge $s_{q}$}
\psfrag{sp}{\Huge $s_{p}$}
\psfrag{sjmm1}{\Huge $s_{j_2}$}
\psfrag{sjm}{\Huge $s_{j_m}$}
\psfrag{sikm1}{\Huge $s_{i_{m}}$}
\psfrag{simp1}{\Huge $s_{j_{m}}$}
\psfrag{sjkm1}{\Huge $s_{j_{m}}$}
\psfrag{cdots}{\Huge $\displaystyle{\cdots}$}
%\psfrag{zab}{\huge $\zeta a + b$}
%\psfrag{eq}{\huge $=$}
\psfrag{fi}{\Huge $s_i$}
\psfrag{fj}{\Huge $s_j$}
\psfrag{cdots}{\Huge $\displaystyle{\cdots}$}
\psfrag{=}{\Huge $=$}
\psfrag{=dij}{\bf{\Huge $=\delta_{ij}$}}
\resizebox{8.0cm}{!}{\includegraphics{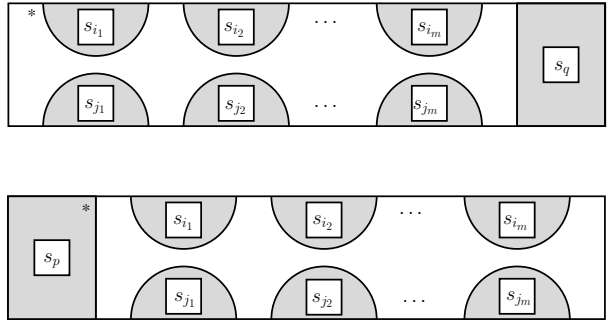}}
\end{center}
\caption{Bases ${\mathcal B}_{(2m+1,\pm)}$  for $m \geq 0$}
\label{fig:bases2}
\end{figure}
\end{theorem}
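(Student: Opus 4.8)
The plan is to prove the theorem in two halves --- an upper bound and a lower bound on the dimensions --- and then to read off the $C^*$-structure; the hard part will be the upper bound.

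Since $P$ is the quotient of the universal planar algebra $P(L)$ with $L = L_{(0,-)} = S$, every vector of $P_{(k,\epsilon)}$ is a linear combination of (images of) planar tangles all of whose internal discs are labelled by some $s_i$, i.e.\ of pictures built from a non-crossing system of strings, some closed loops, and finitely many $0$-boxes lying in shaded regions. First I would show that the pictures of Figures \ref{fig:bases1} and \ref{fig:bases2} --- together with the empty picture for $(0,+)$ and $\{s_1,\dots,s_n\}$ for $(0,-)$ --- span, by a rewriting procedure: (1) delete every closed loop using the white and black modulus relations of Figure \ref{fig:modulus}; (2) inside each shaded region drag any two $0$-boxes together by an isotopy and fuse them using the multiplication relation of Figure \ref{fig:mult}, so that a term is either killed or is reduced to having at most one $s_i$ per shaded region; (3) use the black channel relation of Figure \ref{fig:mult} both to insert a resolution $\sqrt{n}\sum_i s_i$ into each still-unlabelled shaded region and to normalise the string pattern, straightening caps until the underlying non-crossing diagram is the standard one; then iterate (2)--(3). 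An induction on the number of strings shows that this terminates at the standard pictures, and the bookkeeping of the accumulated scalars $\sqrt{n}$ and $1/\sqrt{n}$ is routine. This yields $\dim P_{(0,+)} \le 1$, $\dim P_{(0,-)} \le n$, and $\dim P_{(k,\epsilon)} \le n^{k}$.

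For the matching lower bounds and for positivity, equip $P$ with the conjugate-linear involution determined by the given identity involution on $L$ (so each $s_i$ is self-adjoint) together with the reflection of tangles, and for $x,y \in P_{(k,\epsilon)}$ let $\langle x,y\rangle$ be the scalar obtained by forming the inner-product tangle on $x^{*}$ and $y$ and evaluating the resulting closed labelled picture. A direct computation using nothing beyond the same three families of relations shows that pairing two standard pictures produces a closed picture which collapses to a multiple of the empty picture; hence the Gram matrix of the spanning set of Figures \ref{fig:bases1}--\ref{fig:bases2} with respect to $\langle\cdot,\cdot\rangle$ is positive-definite (indeed diagonal), and likewise $\langle s_i,s_j\rangle$ is $\delta_{ij}$ times a positive constant. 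Provided the empty picture is nonzero in $P$, this forces the spanning sets to be linearly independent --- so the dimensions are exactly $1$, $n$, $n^{k}$ --- and it forces $\langle\cdot,\cdot\rangle$ to be positive-definite on every $P_{(k,\epsilon)}$, so that $P$ is a finite-dimensional $C^{*}$-planar algebra; its modulus is $\sqrt{n}$ by the modulus relations. To rule out the degenerate possibility that the empty picture vanishes, i.e.\ to see that $\dim P_{(0,+)} = 1$ rather than $0$, I would exhibit one concrete nonzero planar algebra satisfying all the relations in $R$ --- namely Jones' spin planar algebra $P^{\mathrm{spin}}$ on $S$, where checking the four relations is a short direct computation --- and invoke the universal property of $P(L)$ to get a planar-algebra homomorphism $\Phi\colon P \to P^{\mathrm{spin}}$. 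Since $\Phi$ sends the empty picture to $1 \in P^{\mathrm{spin}}_{(0,+)} \neq 0$, the empty picture is nonzero in $P$, completing the proof; as a byproduct $\Phi$ carries the bases of Figures \ref{fig:bases1}--\ref{fig:bases2} onto the evident spin-basis of $P^{\mathrm{spin}}_{(k,\epsilon)}$ and is therefore an isomorphism, which is the identification promised in the introduction.

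The main obstacle is the spanning argument, steps (1)--(3): one must check that for every initial picture the modulus, multiplication and channel relations really do suffice, applied in a controlled order and with a genuine termination argument, to reach a standard picture, and --- more delicately --- that distinct standard pictures are never identified along the way, a confluence issue that the injectivity of $\Phi$ ultimately resolves. By comparison, verifying that $P^{\mathrm{spin}}$ satisfies $R$ and that $\Phi$ maps onto a basis is mechanical, and the Gram-matrix computation, though lengthy, is routine bookkeeping of loops and Kronecker deltas.
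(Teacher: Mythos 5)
Your proposal is correct in outline, but it routes the crucial non-degeneracy step through a genuinely different argument than the paper. The spanning half (your steps (1)--(3)) is essentially the paper's Lemma \ref{lemma:span}: reduce to loop-free Temperley--Lieb pictures with one label per black region and check that pure Temperley--Lieb tangles, hence the Jones projections, lie in the span of the standard pictures. The divergence is in proving non-collapse. You exhibit an external model --- Jones' spin planar algebra (equivalently the bipartite-graph planar algebra $P(\Gamma)$) --- verify the relations there, and use the universal property of $P(L)$ to get a surjection $\Phi\colon P \to P^{\mathrm{spin}}$ carrying the empty picture to $1 \neq 0$ and the standard pictures to a linearly independent set; this simultaneously gives the dimension lower bounds and the identification. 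The paper instead proves non-collapse \emph{intrinsically}: it defines explicit combinatorial functionals $\lambda_+$ and $\lambda_{-,i}$ on the universal planar algebra by counting labelled and unlabelled regions of a closed labelled tangle, proves a multiplicativity lemma (Lemma \ref{lemma:multiplicativity}) for composition with annular tangles, and then checks relation by relation (including a six-case analysis of the neighbourhood of the $(2,+)$-box for the black channel relation) that $\lambda_+$ kills the planar ideal $I(R)_{(0,+)}$, so that $\dim P_{(0,+)} = 1$; linear independence of the standard pictures then follows from the internal trace and the multiplication rules of Lemma \ref{lemma:notation}. Your route is shorter and is the standard way to verify a presentation of a \emph{known} object, at the cost of importing the existence and positivity of $P^{\mathrm{spin}}$ as a black box and of hiding the real content in the ``short direct computation'' that the four relations hold there (which the paper also performs, but only in its final, separate proposition). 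The paper's route buys a self-contained skein-theoretic consistency proof --- $\lambda_+$ is in effect the spin-model partition function constructed by hand --- which is the point of a presentation result; it also sidesteps the confluence worry you flag, since a well-defined functional on the quotient certifies that the rewriting cannot identify distinct standard pictures. One small detail you gloss over: for the $(k,-)$ spaces the trace tangle lands in the $n$-dimensional space $P_{(0,-)}$, so your Gram-matrix argument there needs an auxiliary positive functional on $P_{(0,-)}$ (the paper uses $tr(S(i)) = n^{-1}$); this is easily supplied but should be said.
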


We will prove Theorem \ref{thm:spin} in small steps and put them together.

\begin{lemma}\label{lemma:unitmodulus}
The unit and modulus relations  of Figure \ref{fig:unit2} hold in the planar algebra $P$.
\begin{figure}[!h]
\begin{center}
\psfrag{v+}{\huge $v_+$}
\psfrag{v-}{\huge $s_i$}
\psfrag{u1}{\Huge $\displaystyle{\frac{1}{\mu(v_+)}}$}
\psfrag{u2}{\Huge $\displaystyle{\frac{1}{\mu(v_-)}}$}
\psfrag{xi}{\Huge $\xi,\xi$}
\psfrag{text3}{\Huge $= \displaystyle{\sqrt{n}}$}
\psfrag{text1}{\Huge $= \displaystyle{\sum\limits_{v_+ \in \mathcal{V}_+}}$}
\psfrag{text2}{\Huge $= \displaystyle{\sum\limits_{i}}$}
%\psfrag{text3}{\Huge $= \displaystyle{\sum\limits_{\xi: \mathcal{V}_+ \rightarrow \mathcal{V}_-}}$}
\resizebox{9cm}{!}{\includegraphics{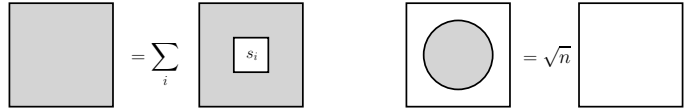}}
\end{center}
\caption{The unit and modulus relations}
\label{fig:unit2}
\end{figure}
\end{lemma}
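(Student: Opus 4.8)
The plan is to obtain each identity of Figure~\ref{fig:unit2} as a consequence, via the planar operad, of the defining relations of Figures~\ref{fig:modulus} and~\ref{fig:mult}. The one general principle used throughout is that if a linear combination of labelled tangles vanishes in $P=P(L)/R$ --- in particular if it is one of the relations in $R$ --- then so does the combination obtained by substituting it into an input disc of any fixed planar tangle; hence, for each relation of Figure~\ref{fig:unit2}, it suffices to exhibit a suitable tangle (or short composite of tangles) into which one of the Figure~\ref{fig:modulus}/Figure~\ref{fig:mult} relations can be fed so as to produce the desired identity on the nose. Since the element $\xi$ occurring in Figure~\ref{fig:unit2} is arbitrary, we may verify each identity with a generic box $\xi$ in place; the rewriting steps below are local and so unaffected by its presence.

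Consider first the modulus relation (the ``$=\sqrt n$'' identity), which asserts that a closed loop of the \emph{shaded} colour, wherever it occurs, may be erased at the cost of a factor $\sqrt n$. For an unshaded loop this is an immediate operad consequence of the white modulus relation of Figure~\ref{fig:modulus}: factor the ambient tangle through the small circle tangle and apply that relation. For a shaded loop it is not a defining relation, and this is where Figure~\ref{fig:mult} enters. I would fill the empty shaded disc bounded by the loop by using the black channel relation of Figure~\ref{fig:mult} ``backwards'', which rewrites the empty shaded channel as $\sqrt n\sum_i(\,\cdot\,)$ with the label $s_i$ inserted, and then apply the black modulus relation of Figure~\ref{fig:modulus}, in which a shaded loop around a single $s_i$ contributes $\tfrac1{\sqrt n}$, to resolve each term. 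Summing over the $n$ values of $i$ and keeping track of the constants ($\sqrt n\cdot n\cdot\tfrac1{\sqrt n}$, together with whatever normalisation $\mu(v_\pm)$ carries) leaves exactly $\sqrt n$.

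Next the two unit relations. The one indexed by $\sum_i$ (with box $s_i$ and weight $\tfrac1{\mu(v_-)}$) is, after closing up the picture, essentially the black channel relation of Figure~\ref{fig:mult} reread as a resolution of the identity; I would derive it by capping that relation with an appropriate annular tangle and, where two $s$-labelled discs come to lie in a common shaded region, collapsing the resulting double sum using the multiplication relation $s_is_j=\delta_{ij}s_i$ of Figure~\ref{fig:mult}. The relation indexed by $\sum_{v_+\in\mathcal V_+}$ (with weight $\tfrac1{\mu(v_+)}$) is its unshaded counterpart; since $L$ has no labels of colour $(0,+)$, the set $\mathcal V_+$ is the one prescribed by the universal-planar-algebra formalism, and the identity reduces, after unwinding, to the white modulus relation of Figure~\ref{fig:modulus} once the constant $\mu(v_+)$ has been evaluated (again as the value $\sqrt n$ of a closed unshaded loop). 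The step I expect to require the most care is not any single diagram chase but the bookkeeping common to all of them: choosing the annular tangles so that the shadings match --- so that one really lands in $P_{(0,+)}$ rather than $P_{(0,-)}$, and the loop one creates or destroys is of the intended colour --- and tracking the powers of $\sqrt n$ and the factors $\mu(v_\pm)$ so that all normalisations cancel as claimed.
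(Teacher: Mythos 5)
Your proposal is correct in substance and follows essentially the same route as the paper: both arguments obtain the new relations of Figure~\ref{fig:unit2} by feeding the black channel relation of Figure~\ref{fig:mult} into suitable capping (annular) tangles and then cleaning up with the modulus and multiplication relations. The paper organizes this slightly differently --- it first caps the black channel relation once to produce an intermediate $(1,+)$ identity $\sum_i(\hbox{strand with }s_i)=(\hbox{strand})$ (Figure~\ref{fig:unit3}), and then applies the left and right conditional expectation tangles to get the two relations of Figure~\ref{fig:unit2}; you do each capping in one step, and your derivation of the $(0,-)$ unit relation additionally invokes the multiplication relation (to collapse the two $s$-boxes that land in a common shaded region), which the paper's two-stage route avoids. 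Two small points of bookkeeping: the product you display, $\sqrt{n}\cdot n\cdot\tfrac{1}{\sqrt{n}}$, equals $n$ rather than $\sqrt{n}$ --- the correct count is $\sqrt{n}\cdot n\cdot\tfrac{1}{n}$, because capping \emph{both} sides of the black channel relation produces \emph{two} shaded loops each containing an $s_i$, hence two factors of $\tfrac{1}{\sqrt{n}}$ per term; and the ``$\sum_{v_+\in\mathcal V_+}$'' and ``$\mu(v_\pm)$'' labels are vestigial placeholders from a bipartite-graph template (there is no $\mu$ in this paper's relations), so there is no separate unshaded unit relation to derive beyond the trivial one. Neither point affects the validity of your approach.
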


\begin{proof}
Equivalently, what is being asserted is that the relations of Figure \ref{fig:unit2} are in the planar ideal $I(R)$
of $P(L)$ generated by $R$, which is what we will actually prove. Begin with the black channel relation, cap on the bottom and use the black modulus relations to observe that the relation of Figure \ref{fig:unit3} is in $I(R)$.

\begin{figure}[!h]
\begin{center}
%\psfrag{zab}{\huge $\zeta a + b$}
%\psfrag{eq}{\huge $=$}
\psfrag{fi}{\Huge $s_i$}
\psfrag{fj}{\Huge $s_j$}
\psfrag{text2}{\Huge $\displaystyle{\sum\limits_i}$}
\psfrag{=}{\Huge $=$}
\psfrag{=dij}{\bf{\Huge $=\delta_{ij}$}}
\resizebox{4.5cm}{!}{\includegraphics{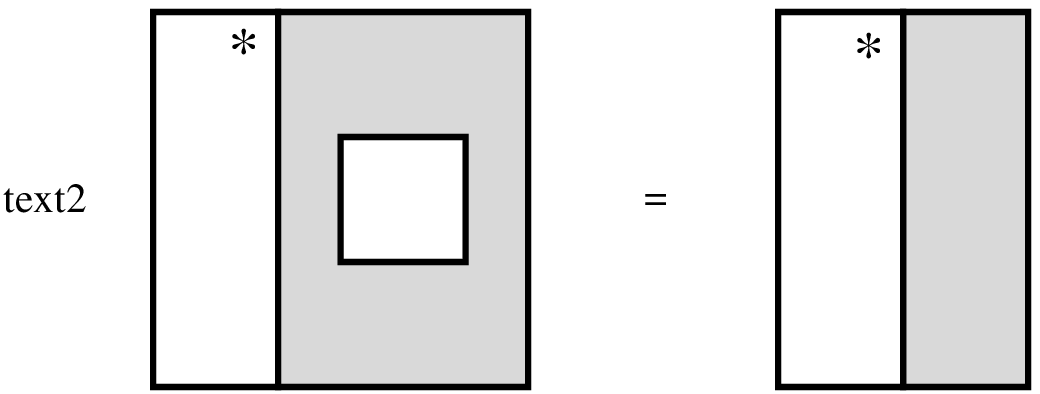}}
\end{center}
\caption{Another unit relation}
\label{fig:unit3}
\end{figure}

Both the relations of Figure \ref{fig:unit2} follow from Figure \ref{fig:unit3} - 
applying the left conditional expectation tangle and using the white modulus relation gives the relation
on the left in Figure \ref{fig:unit2}, while applying the right conditional expectation tangle and using the 
black modulus relations gives the relation
on the right in Figure \ref{fig:unit2}. 
\end{proof}

Henceforth, we will refer to both the relations on the right in Figures \ref{fig:modulus} and \ref{fig:unit2} as black modulus relations. We recall that the tangle appearing on the extreme right (respectively the extreme left) in Figure \ref{fig:unit2} is the unit tangle of colour $(0,+)$ (respectively $(0,-)$) and is denoted by
$1^{(0,+)}$ (respectively $1^{(0,-)}$).
The main step in the proof of Theorem \ref{thm:spin} is the following proposition and its corollary.

\begin{proposition}\label{prop:conn}
$dim(P_{(0,+)}) = 1$.
\end{proposition}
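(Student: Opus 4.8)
The plan is to show that $P_{(0,+)}$ is spanned by the empty diagram $1^{(0,+)}$, since the modulus relations already force this space to be at most $1$-dimensional and the black modulus relation exhibits $1^{(0,+)}$ as nonzero enough to conclude equality (the dimension being exactly $1$ will follow once we know $P$ maps onto the spin planar algebra, but at this stage the clean statement is the upper bound $\dim P_{(0,+)} \leq 1$, with equality pinned down afterwards). So the real content is: every element of $P_{(0,+)}$, represented by a $\mathbb{C}$-linear combination of $(0,+)$-tangles labelled by elements of $S$, can be reduced modulo the relations $R$ to a scalar multiple of $1^{(0,+)}$.

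First I would recall that a general labelled $(0,+)$-tangle is a system of disjoint closed curves in the disc, together with internal discs each carrying a label $s_i$ (a $(0,-)$-box, i.e.\ a univalent vertex with a single $s_i$-labelled internal disc sitting in a black region). The strategy is a standard "evaluation" argument: peel the tangle apart from the outside in. Using isotopy one may assume the labelled internal discs are arranged in some standard position; then one repeatedly applies the local relations. The key tools are the two relations that actually consume structure: the \emph{multiplication relation} (Figure \ref{fig:mult}, left), which fuses two adjacent $s_i$ and $s_j$ boxes on a common black region into $\delta_{ij}$ times a single box (with a factor $\tfrac{1}{\sqrt n}$), and the \emph{black channel relation} together with its consequence Figure \ref{fig:unit3}, which lets one create/remove a pair of labelled boxes joined by a black "bigon" at the cost of a sum $\sum_i$ and the appropriate power of $\sqrt n$. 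Combined with the white and black modulus relations, which remove closed loops bounding an all-white or all-black region for the price of $\sqrt n$ or $\tfrac{1}{\sqrt n}$, these suffice to kill every closed curve and every labelled box.

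The key steps, in order, would be: (1) reduce to a tangle with no closed loops and with all labelled boxes "in general position", using isotopy and the modulus relations of Figure \ref{fig:modulus}; (2) if there are two or more labelled boxes, find two of them that can be brought adjacent across a black region — here one uses that a $(0,+)$-tangle has its outer region white, so the black regions form "islands", and on any such island pick an innermost configuration — and apply the multiplication relation to reduce the box count by one (picking up a $\delta_{ij}$ and a $\tfrac{1}{\sqrt n}$); (3) iterate (2) until at most one labelled box remains; (4) a single labelled box sitting inside the empty $(0,+)$-disc is, after capping appropriately, exactly the left side of the relation in Figure \ref{fig:unit3} (or is handled by the black channel relation directly), so it too is rewritten as a scalar times $1^{(0,+)}$; (5) conclude that $P_{(0,+)} = \mathbb{C}\cdot 1^{(0,+)}$, hence $\dim P_{(0,+)} \leq 1$, and observe that the relations do not collapse $1^{(0,+)}$ to $0$ (equivalently, they are satisfied by the spin planar algebra, where this space is genuinely $1$-dimensional), giving $\dim P_{(0,+)} = 1$.

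The main obstacle I anticipate is step (2): making precise the claim that in any labelled $(0,+)$-tangle with $\geq 2$ boxes one can \emph{always} maneuver two boxes into the exact local picture required by the multiplication relation, rather than some topologically more complicated adjacency (e.g.\ the two boxes share a black region but with several strands of that region running between them, or the region between them is white so that the multiplication relation does not directly apply). Handling this cleanly probably requires an induction on the number of strands separating adjacent boxes, using the black channel relation to "absorb" a separating strand, or a normal-form argument showing every black region can be shrunk until it touches exactly the boxes one wants; alternatively one sets up a well-chosen complexity function (number of boxes, then number of strands, then number of loops, lexicographically) and checks each relation strictly decreases it. Verifying that this termination/confluence actually goes through — i.e.\ that one never gets stuck — is the delicate part, and I expect the authors' proof to organise it either via such a complexity argument or by exhibiting an explicit spanning set and checking the relations act transitively on it.
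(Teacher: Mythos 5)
The easy half of your argument---reducing every labelled $(0,+)$-tangle to a scalar multiple of $1^{(0,+)}$---is essentially the paper's, and in fact you make it harder than it is: the generators are $(0,-)$-boxes, which have \emph{no strings attached}, so the obstacle you single out in step (2) (strands separating two boxes, white regions intervening, a complexity function on strand counts) never arises. Two labelled boxes either lie in the same black region, in which case the multiplication relation fuses them directly, or they do not, in which case they are separated by closed loops and the modulus relations peel those away; the paper disposes of all this in three lines by induction on the number of loops. So the part you flag as delicate is not where the difficulty lies.

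The genuine gap is in your step (5). Showing $\dim P_{(0,+)}\leq 1$ says nothing until you rule out $1^{(0,+)}\in I(R)$, i.e.\ total collapse of the $(0,+)$-space, and a relation cannot ``exhibit $1^{(0,+)}$ as nonzero''---only a nonzero invariant of the quotient can. You defer this to the statement that the relations hold in the spin planar algebra. That route is not circular in principle (the surjection onto $P(\Gamma)$ in the paper's final proposition does not use Proposition \ref{prop:conn}), but you have not verified a single relation in that model, and that verification is exactly the content being replaced. The paper's proof instead constructs an explicit linear functional $\lambda_+$ on $P(L)_{(0,+)}$ (counting empty versus box-containing regions and testing consistency of labels within each black region), proves a multiplicativity property of $\lambda_+$ and the companion functionals $\lambda_{-,i}$ under composition with annular labelled tangles (Lemma \ref{lemma:multiplicativity}), and then checks relation by relation---including a six-case analysis of the neighbourhood of the $(2,+)$-box for the black channel relation---that $\lambda_+$ kills $I(R)_{(0,+)}$. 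None of this appears in your proposal, so as written the proof establishes only the inequality $\dim P_{(0,+)}\leq 1$.
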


Before we prove Proposition \ref{prop:conn}, we will define a collection of linear functionals 
$\lambda_+: P(L)_{(0,+)} \rightarrow {\mathbb C}$
and $\lambda_{-,i}: P(L)_{(0,-)} \rightarrow {\mathbb C}$ for $i=1,2,\cdots,n$.
These are defined on bases of $P(L)_{(0,\pm)}$ as follows and extended by linearity.

A basis element of $P(L)_{(0,\pm)}$ is an $L$-labelled $(0,\pm)$-tangle, say $T$, which is just a collection of (possibly nested) closed 
loops with each of its black regions having some (possibly none) $(0,-)$ boxes labelled by elements of $L=S$. 
We will say that $T$ is inconsistently labelled if some black region of $T$ has boxes labelled by
two or more different elements of $S$ and consistently labelled otherwise.
Let $E(T)$ be the number of non-external regions of $T$ that do not have a box in it and $N(T)$ be the number of non-external regions of $T$  which have at least one labelled box (which are therefore necessarily black). Since there is 
a 1-1 correspondence between the loops of $T$ and the non-external
regions of $T$, with the loop corresponding to a region being its external boundary, $E(T) + N(T)$ is the total number of loops of $T$.

For $T$ as above of colour $(0,+)$, define $\lambda_+(T)$ as follows. 
$$
\lambda_+(T) = \left\{ \begin{array}{ll}
                        0 & {\text {if $T$ is inconsistently labelled}}\\
                        (\sqrt{n})^{E(T)-N(T)} & {\text {otherwise}}.
                        \end{array} \right.
$$
%
%
%
%
%
%
%
%
%If any (necessarily black) region of $T$ has boxes labelled by
%two or more different elements of $S$, then set $\lambda_+(T)=0$. Else, set $\lambda_+(T)=
%(\sqrt{n})^{E-N}$.
Similarly, for $T$ as above of colour $(0,-)$, define $\lambda_{-,i}(T)$ as follows. 
$$
\lambda_{-,i}(T) = \left\{ \begin{array}{ll}
                        0 & {\text {if $T$ is inconsistently labelled, or if the external }}\\
                         & {\text {region of $T$ has a box not labelled by $s_i$}}\\
                        (\sqrt{n})^{E(T)-N(T)} & {\text {otherwise}}.
                        \end{array} \right.
$$
%If any (necessarily black) region of $T$ has boxes labelled by
%two or more different elements of $S$ or if the external region of $T$ has a box not labelled by $s_i$, then set $\lambda_{-,i}(T)=0$. Else, set $\lambda_{-,i}(T)=
%(\sqrt{n})^{E-N}$.

A key `multiplicativity property' of these functionals is stated in the following lemma. By an annular $L$-labelled tangle we mean a tangle all of whose internal boxes except for one are labelled by elements of $L$. In particular, all its labelled internal boxes are coloured $(0,-)$ while the unlabelled box  may be of any colour.
The lemma below refers to the tangles $S(i)$. Here, and in the sequel, $S(i)$ will denote the $(0,-)$-tangle with a single internal $(0,-)$ box labelled $s_i$ (which appears on the right hand sides of the multiplication relation of Figure \ref{fig:mult} or of the unit relation of Figure \ref{fig:unit2}).

\begin{lemma}\label{lemma:multiplicativity} Let $A$ be an annular $L$-labelled $(0,\pm)$-tangle with its unlabelled box also of colour $(0,\pm)$. Then,\\
(a) If $A$ is of colour $(0,+)$ and its unlabelled box is also of colour $(0,+)$, and $U$ is an $L$-labelled $(0,+)$-tangle, then $\lambda_+(A \circ U) = \lambda_+(A \circ 1^{(0,+)})\lambda_+(U)$.\\
(b) If $A$ is of colour $(0,+)$ and its unlabelled box is of colour $(0,-)$, and $U$ is an $L$-labelled $(0,-)$-tangle, then $\lambda_+(A \circ U) = \sum_i \lambda_+(A \circ S(i))\lambda_{-,i}(U)$.\\
(c) If $A$ is of colour $(0,-)$ and its unlabelled box is of colour $(0,+)$, and $U$ is an $L$-labelled $(0,+)$-tangle, then for every $k$, $\lambda_{-,k}(A \circ U) = \lambda_{-,k}(A \circ 1^{(0,+)})\lambda_+(U)$.\\
(d) If $A$ is of colour $(0,-)$ and its unlabelled box is also of colour $(0,-)$, and $U$ is an $L$-labelled $(0,-)$-tangle, then for every $k$, $\lambda_{-,k}(A \circ U) = \sum_i \lambda_{-,k}(A \circ S(i))\lambda_{-,i}(U)$
\end{lemma}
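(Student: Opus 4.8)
The plan is to prove all four parts by a direct computation on basis tangles, reducing everything to counting loops and checking consistency of labels. Since $\lambda_+$ and $\lambda_{-,i}$ are defined on bases of $P(L)_{(0,\pm)}$, and composition with a fixed annular tangle $A$ is linear, it suffices to verify each identity when $U$ is a single basis tangle, i.e. a collection of nested loops whose black regions carry some $(0,-)$ boxes labelled by elements of $S$.

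First I would set up notation for how $A \circ U$ decomposes. When we glue the basis tangle $U$ into the unlabelled box of $A$, the loops of the resulting tangle $A \circ U$ split into two kinds: those coming entirely from strings of $A$ (together with whatever happens in $A$'s own regions and labelled boxes), and those coming entirely from $U$ lying strictly inside the glued-in disc, which survive untouched. The only interaction happens at the boundary circle of the box: in case (a)/(c) the box is of colour $(0,+)$, so $U$ contributes a closed region (its external region, which is white for $(0,+)$) that merges with whatever region of $A$ abuts the box; in case (b)/(d) the box is of colour $(0,-)$, and the single marked interval on the box boundary means the external black region of $U$ gets connected through $A$ to the rest. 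I would make precise, in each case, the bijection between (non-external regions of $A \circ U$) and (non-external regions of $A \circ 1^{(0,\pm)}$ or $A \circ S(i)$) $\sqcup$ (non-external regions of $U$), keeping track of which regions acquire a box and which stay empty. This gives the additivity $E(A\circ U) = E(A\circ \bullet) + E(U)$ and $N(A\circ U) = N(A\circ\bullet) + N(U)$ up to one region on the seam that must be bookkept carefully — that seam region is exactly where the normalization $1^{(0,+)}$ versus $S(i)$, or the factor $\tfrac1{\sqrt n}$ hidden in the definitions via $E-N$, gets absorbed. Once the region counts are additive, $(\sqrt n)^{E-N}$ multiplies, giving the product formula on the nonzero branch.

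The consistency (vanishing) branch is then the bookkeeping: $A \circ U$ is consistently labelled iff $U$ is consistently labelled \emph{and} $A$, with $U$'s external-region label inserted where appropriate, is consistently labelled. In case (b), $U$'s external region is black and carries a well-defined label $s_i$ precisely when $\lambda_{-,i}(U)\neq 0$; that label then flows into $A$, and $A \circ S(i)$ is exactly ``$A$ with an $s_i$ inserted on the seam.'' So the sum $\sum_i \lambda_+(A\circ S(i))\lambda_{-,i}(U)$ picks out the unique $i$ making $\lambda_{-,i}(U)\neq 0$ (if $U$ is consistently labelled with its external region forced to $s_i$) or, if $U$'s external region has no box, it runs over all $i$ — and in that last subcase one checks that $A\circ S(i)$ inherits the same empty/nonempty region structure regardless of $i$, so the sum just reproduces $\lambda_+(A\circ 1^{(0,+)})$-style counting with the extra factor. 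Case (d) is identical with $\lambda_{-,k}$ in place of $\lambda_+$; the index $k$ only constrains the \emph{external} region of $A\circ U$, which by the region bijection is the external region of $A$ and is therefore unaffected by $U$, so $k$ rides along passively throughout.

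The main obstacle will be the careful treatment of the seam region and of the degenerate configurations: e.g. when $U$ is the empty $(0,+)$-tangle (so its white external region is all there is) versus when it has loops, and — more delicately — when gluing causes a region of $A$ that was empty to become labelled (because it merges with a labelled region of $U$) or vice versa. Getting the $E$ versus $N$ accounting right across this merge, and confirming that the choice $1^{(0,+)}$ resp. $S(i)$ on the right-hand side reproduces exactly the seam region's contribution (empty region contributes to $E$, $s_i$-labelled region contributes to $N$), is the crux; everything else is a routine diagram chase. I would handle it by drawing the seam region explicitly in each of the four cases and comparing the loop of $A\circ U$ through the box boundary with the corresponding loop of $A \circ 1^{(0,\pm)}$ or $A \circ S(i)$.
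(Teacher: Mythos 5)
Your treatment of parts (a) and (b) is essentially the paper's own proof: the same decomposition of the black regions of $A\circ U$ into those coming from $A$, those coming from the interior of $U$, and the single merged seam region; the same consistency case analysis; and the same key point that when the seam region carries no label, all $n$ terms of the sum are equal and the resulting factor $n=(\sqrt n)^2$ must be absorbed by a shift of $2$ in the $E-N$ exponent. The paper carries out exactly the bookkeeping you defer, namely $E(A\circ U)=E(A\circ S(i))+1+E(U)$ and $N(A\circ U)=N(A\circ S(i))-1+N(U)$ in that subcase, so nothing essential is missing there.

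Where you genuinely diverge from the paper, and where there is a gap, is in (c) and (d). The paper reduces these to (a) and (b) via the identity $\lambda_{-,k}(U)=\sqrt n\,\lambda_+(T(k)\circ U)$, where $T(k)$ closes off the external region with an $s_k$-labelled box. You instead argue directly that ``$k$ rides along passively'' because the external region of $A\circ U$ is the external region of $A$ and is unaffected by $U$. That is fine for (c), since a $(0,+)$-box sits in a white region of $A$ and so cannot lie in the black external region; but it fails for (d): the unlabelled $(0,-)$-box of $A$ may sit in the external black region of $A$ (take $A$ to be the identity annular tangle, so $A\circ U=U$), in which case the seam region \emph{is} the external region of $A\circ U$ and absorbs the external black region of $U$ together with its labels. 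The vanishing condition defining $\lambda_{-,k}$ then interacts nontrivially with $U$'s external label; the identity still holds, but verifying it requires rerunning the seam analysis of (b) with the extra constraint that the seam label be $s_k$, not the passive argument you give. Either patch your direct argument to cover this configuration, or adopt the paper's $T(k)$ reduction, which converts the external-region constraint into an ordinary internal seam and lets part (b) do all the work.
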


\begin{proof} (a) The black regions of $A \circ U$ are of two kinds - those that correspond to the black regions of $A$ and those that correspond to the black regions of $U$. It follows that $A \circ U$ is consistently labelled iff each of $A \circ 1^{(0,+)}$ and $U$ are so.
Hence assume that all of $A \circ U$, $A \circ 1^{(0,+)}$ and $U$ are consistently
labelled. It will then suffice to see that $E(A\circ U) = E(A \circ 1^{(0,+)}) + E(U)$ and $N(A\circ U) = N(A \circ 1^{(0,+)}) + N(U)$ to finish the proof. The latter of these equations follows from the bijection between the sets of black regions, and the former since $E(T)+N(T)$ is the total number of loops of $T$, and clearly
the total number of loops of $A \circ U$ equals the sum of the total number of loops of $A \circ 1^{(0,+)}$ and of $U$ put together.\\
(b) The black regions of $A \circ U$ are of 3 kinds - those that correspond to
black regions of $A$ not containing its unlabelled box, the union of the black region of $A$  containing its unlabelled box and the external black region of $U$, and those that correspond to non-external black regions of $U$. 
Suppose that $A \circ U$ is inconsistently labelled. Thus some black region in it has boxes labelled by two or more different elements of $S$. 
If an inconsistently labelled black region of $A \circ U$ is of the first kind, then, for each $i$, $A \circ S(i)$ is inconsistently labelled. If an inconsistently labelled black region of $A \circ U$ is of the third kind, then, $U$ itself is inconsistently labelled. In the remaining case, if the black region of $A \circ U$ that contains the external region of $U$ is inconsistently labelled, then a little thought shows that for each $i$, at least one of $\lambda_+(A \circ S(i))$ or $\lambda_{-,i}(U)$ must vanish. We therefore have seen that if $\lambda_+(A \circ U)$ vanishes, then, so does $\sum_i \lambda_+(A \circ S(i))\lambda_{-,i}(U)$.

Next, suppose that $A \circ U$ is consistently labelled. We will distinguish two cases here according as the
black region of $A \circ U$ corresponding to the external region of $U$ contains a labelled box or not.
In the first case, suppose that a box labelled $s_i$ is in this black region. Then it is clear that only the $i$-term is non-zero in the RHS and so we need to check that $\lambda_+(A \circ U) = \lambda_+(A \circ S(i))\lambda_{-,i}(U)$. This will follow from checking that, in this case, $E(A\circ U) = E(A \circ S(i)) + E(U)$ and $N(A\circ U) = N(A \circ S(i)) + N(U)$. 
%Again, additivity of $E+N$ is clear, so it suffices to check that
%$N$ is additive to finish the proof.
To prove the latter of these two equations, $N(A \circ U)$ counts the number of black regions of $A \circ U$
that have at least one labelled box. Of the three kinds of black regions of $A \circ U$ alluded to above, the first two kinds are counted in $N(A \circ S(i))$ (the second kind because of the case we're in) and the third kind in $N(U)$. The former equation follows since the sum of $N(T)$ and $E(T)$ is the number of loops of $T$, and clearly, the total number of loops of $A \circ U$ equals the sum of the total number of loops of $A \circ S(i)$ and of $U$ put together.

In the second case, the
black region of $A \circ U$ corresponding to the external region of $U$ contains no labelled boxes. Thus, neither the black region of $A$ containing its unlabelled box nor the external black region of $U$ contains a
labelled box. In this case, a little thought shows that $U$ is consistently labelled, as is $A \circ S(i)$ for
every $i$, and further that all the $\lambda_+(A \circ S(i))\lambda_{-,i}(U)$ are equal. So it suffices to see
that for some (any) $i$, $E(A \circ U) - N(A \circ U) = 2 + E(A \circ S(i)) - N(A \circ S(i)) + E(U) - N(U)$.
Here, observe that $E(A \circ U) = E(A \circ S(i)) + 1 + E(U)$ - this equations arising from considering the
three kinds of black regions of $A \circ U$. As before since the  number of loops of $A \circ U$
is the sum of the number of loops of $A \circ S(i)$ and of $U$, it follows that $N(A \circ U) = N(A \circ S(i)) - 1 + E(U)$ and so the desired equality follows.\\
(c) and (d) These follow from (a) and (b) and the observation (which follows from the definitions) that for any $L$-labelled $(0,-)$-tangle $U$, and for any $k$, $\lambda_{-,k}(U) =\sqrt{n} \lambda_+(T(k) \circ U)$, where $T(k)$ is the annular tangle in Figure \ref{fig:tk}.
\begin{figure}[!h]
\begin{center}
%\psfrag{zab}{\huge $\zeta a + b$}
%\psfrag{eq}{\huge $=$}
%\psfrag{fi}{\Huge $s_i$}
\psfrag{fj}{\Huge $s_k$}
%\psfrag{text2}{\Huge $\displaystyle{\sum\limits_i}$}
%\psfrag{=}{\Huge $=$}
%\psfrag{=dij}{\bf{\Huge $=\delta_{ij}$}}
\resizebox{2.5cm}{!}{\includegraphics{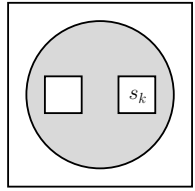}}
\end{center}
\caption{The tangle $T(k)$}
\label{fig:tk}
\end{figure}
\end{proof}

\begin{proof}[Proof of Proposition \ref{prop:conn}] %We first show that for a basis element $T$ of $P(L)_{(0,+)}$,
%its image in $P_{(0,+)}$ equals $\lambda_+(T)1_{(0,+)}$.
We first show that the image of a basis element of $P(L)_{(0,+)}$ in $P_{(0,+)}$ is a multiple of $1_{(0,+)}$. Such a basis element is an $L$-labelled $(0,+)$-tangle which is just a collection of nested closed 
loops with some of its black regions having some $(0,-)$ boxes labelled by elements of $L=S$.
Using only the black and white modulus relations and working from the innermost loops outward - formally, by induction on the number of loops - it is clear that the 
image of such a basis element is a scalar multiple of $1_{(0,+)}$. Thus $dim(P_{(0,+)}) \leq 1$.

The harder part of the proof is to show non-collapse. For this, we will show that the linear functional
$\lambda_+$ defined on $P(L)_{(0,+)}$ vanishes on $I(R)_{(0,+)}$ (which is, by definition, the $(0,+)$ part of the planar ideal $I(R)$ generated by $R$) and consequently descends to $P_{(0,+)}$. Since $\lambda_+$ is
clearly surjective, this will finish the proof.

%
%
%
%
%
%
%
%
%we define a linear functional $\lambda_+ :
%P(L) \rightarrow {\mathbb C}$ that is surjective and then show that it descends to the quotient $P_{(0,+)}$
%by showing that it vanishes on the ${(0,+)}$ part of the ideal $I(R)_{(0,+)}$. We define $\lambda_+$ by
%defining it on a basis of $P(L)_{(0,+)}$ and extending linearly. Take a basis element of $P(L)_{(0,+)}$ - an $L$-labelled $(0,+)$-tangle, say $T$. We will refer to a  loop that has a black region inside as a black loop and the others as white loops. Define $\lambda_+(T)$ as follows. If any (black) region of $T$ has boxes labelled by
%two or more different elements of $L$, then set $\lambda_+(T)=0$. Else, define it to be
%$$
%(\sqrt{n})^L (\frac{1}{n})^N
%$$
%where $L$ is the total number of loops and $N$ is the number of loops - necessarily black - such that the corresponding inside region has at least one (labeled) box.

To show that $\lambda_+$ vanishes on $I(R)_{(0,+)}$, note that a spanning set of $I(R)_{(0,+)}$ consists of all
$Z_T^{P(L)}(x_1 \otimes x_2 \otimes \cdots \otimes x_b)$ where $T$ is a $(0,+)$-tangle with internal boxes of colour $(k_j,\epsilon_j)$ for $j = 1,2,\cdots,b$, $x_i \in R$ for one $i$ and all other $x_j$ are basis elements of $P(L)_{(k_j,\epsilon_j)}$. Since $R$ is non-empty only in colours $(0,\pm)$ and $(2,+)$, $(k_i,\epsilon_i)$ is
necessarily one of these colours.

Consider the annular $L$-labelled $(0,+)$-tangle, say $A$, obtained from $T$ by substituting the basis elements $x_j$ in all the boxes of $T$ except for the $i^{th}$-box. What we need to see is that for each
of the relations in $R$, substituting the left hand side of the relation and evaluating $\lambda_+$ on the
resulting (not necessarily basis) element of $P(L)_{(0,+)}$ so obtained gives the same result as substituting the right hand side of the relation and evaluating $\lambda_+$ on the resulting element of $P(L)_{(0,+)}$.

This easily follows from Lemma \ref{lemma:multiplicativity} for each of the white and
black modulus relations in Figure \ref{fig:modulus} as well as for the multiplication relation in Figure \ref{fig:mult}. For instance, we show how this works for the multiplication relation. What we need to see here is that for every annular $(0,+)$-tangle $A$ with unlabelled box of colour $(0,-)$, 
$$
\lambda_+(A \circ W(i,j)) = \delta_{ij} \lambda_+(A \circ S(i)),
$$
where $W(i,j)$ is the $L$-labelled $(0,-)$-tangle that appears on the left side of the multiplication relation of Figure \ref{fig:mult}. By multiplicativity, this is equivalent to verifying that
$$
\sum_k \lambda_+(A \circ S(k))\lambda_{-,k}(W(i,j)) = \delta_{ij} \sum_k \lambda_+(A \circ S(k))\lambda_{-,k}(S(i)).
$$
We immediately reduce to checking that  $\lambda_{-,k}(W(i,j)) = \delta_{ij}\lambda_{-,k}(S(i))$ for every $k$, which is indeed true by definition of the functional $\lambda_{-,k}$.

Invariance under the black channel relation needs a little work.
Consider the structure of the annular tangle $A$ in this case. It is a $(0,+)$-tangle with  a single unlabelled $(2,+)$-box and (possibly) a number of labelled $(0,-)$-boxes. Since the strings impinging on the $(2,+)$-box have to be connected among themselves, there are six (classes of) possibilities for the region around the $(2,+)$-box - as shown in Figure \ref{fig:Ttilde}.
\begin{figure}[!h]
\begin{center}
%\psfrag{zab}{\huge $\zeta a + b$}
%\psfrag{eq}{\huge $=$}
\psfrag{fi}{\Huge $s_i$}
\psfrag{fj}{\Huge $s_j$}
\psfrag{text2}{\Huge $\displaystyle{\sum\limits_i}$}
\psfrag{=}{\Huge $=$}
\psfrag{=dij}{\bf{\Huge $=\delta_{ij}$}}
\resizebox{9cm}{!}{\includegraphics{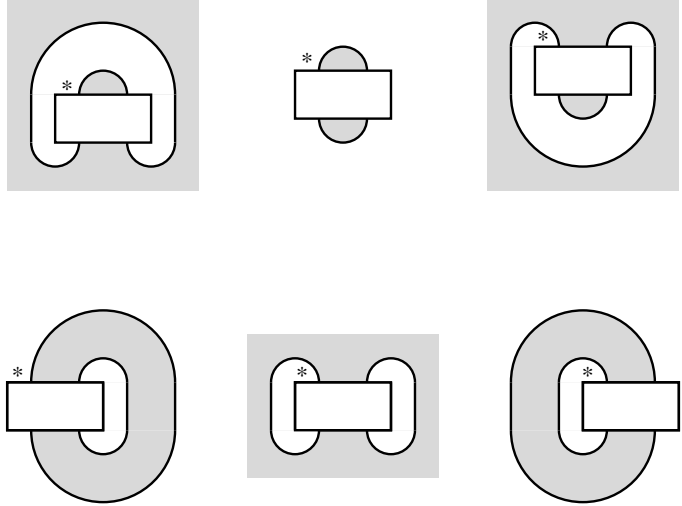}}
\end{center}
\caption{Region around the unlabelled $(2,+)$-box of $A$}
\label{fig:Ttilde}
\end{figure}
Note that in each of these figures, we have not shown further detail within each of the (two) bounded regions
in each case. Explicitly, for instance, the top left picture in Figure \ref{fig:Ttilde} actually stands for a picture
of the type shown in Figure \ref{fig:detail},
\begin{figure}[!h]
\begin{center}
%\psfrag{zab}{\huge $\zeta a + b$}
%\psfrag{eq}{\huge $=$}
\psfrag{fi}{\Huge $s_i$}
\psfrag{fj}{\Huge $s_j$}
\psfrag{text2}{\Huge $\displaystyle{\sum\limits_i}$}
\psfrag{=}{\Huge $=$}
\psfrag{=dij}{\bf{\Huge $=\delta_{ij}$}}
\resizebox{4cm}{!}{\includegraphics{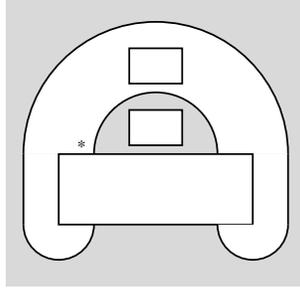}}
\end{center}
\caption{Detail of the top left figure in Figure \ref{fig:Ttilde}}
\label{fig:detail}
\end{figure}
where the two unlabelled boxes in the bounded regions contain some labeled $(0,\pm)$-tangles according
to their colour.

Again, using Lemma \ref{lemma:multiplicativity} as above, we reduce to showing that when $A$ is one of the 6 annular tangles in Figure \ref{fig:Ttilde}, substituting the left hand side of the black channel relation into the
$(2,+)$-box of $A$ and evaluating $\lambda_+$ or $\lambda_{-,k}$ on the result (according as $A$ is
of colour $(0,+)$ or $(0,-)$) gives the same result as doing the same for the right hand side.

This involves a series of checks. We do one of them and leave the rest (which are all similar) to the  conscientious reader. Suppose, for instance, that $A$ is the annular $(0,-)$-tangle in the middle of the
bottom row of Figure \ref{fig:Ttilde}. What needs to be checked in this case is that for every $k$, the equation of Figure \ref{fig:check} holds.
\begin{figure}[!h]
\begin{center}
%\psfrag{zab}{\huge $\zeta a + b$}
%\psfrag{eq}{\huge $=$}
\psfrag{fi}{\Huge $s_i$}
\psfrag{fj}{\Huge $s_j$}
\psfrag{text2}{\Huge $\displaystyle{\sum\limits_i} \lambda_{-,k}$}
\psfrag{=}{\Huge $=\frac{1}{\sqrt{n}} \lambda_{-,k}$}
\psfrag{=dij}{\bf{\Huge $=\delta_{ij}$}}
\resizebox{10cm}{!}{\includegraphics{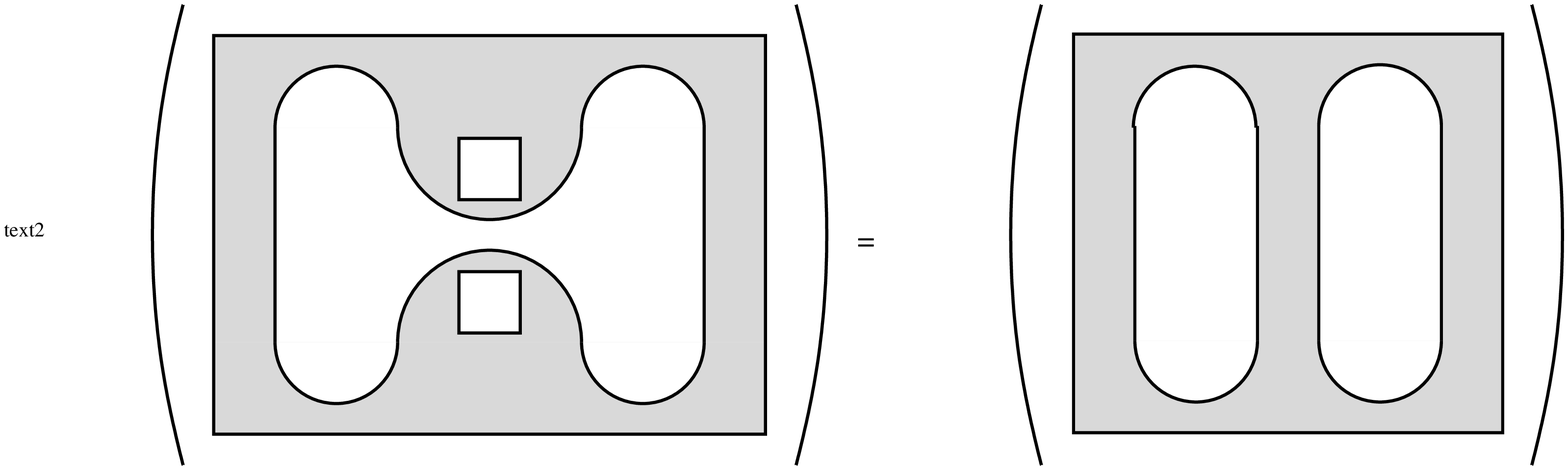}}
\end{center}
\caption{Equation to be checked}
\label{fig:check}
\end{figure}
Now, by definition of $\lambda_{-,k}$, the only term that survives on the left is when $i=k$ and this evaluates
to $\sqrt{n}$ which is easily seen to be exactly what the right side also evaluates to.
\end{proof}

\begin{corollary}\label{cor:conn}
$dim(P_{(0,-)}) = n$.
\end{corollary}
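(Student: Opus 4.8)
The plan is to deduce the corollary from Proposition~\ref{prop:conn} by exhibiting a natural isomorphism between $P_{(0,-)}$ and a direct sum of $n$ copies of $P_{(0,+)}$. The key structural observation is that the conditional expectation tangles and their partial inverses relate the $(0,-)$-space to the $(0,+)$-space. Concretely, I would use the annular tangles $T(k)$ of Figure~\ref{fig:tk} (which wrap a $(0,+)$-box inside a black disc carrying a single box labelled $s_k$) together with the left conditional expectation tangle $E\colon P_{(0,-)} \to P_{(0,+)}$ that caps off the black outer region.

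First I would establish the upper bound $dim(P_{(0,-)}) \leq n$. Given any $L$-labelled $(0,-)$-tangle $T$, working from the innermost loops outward using only the white and black modulus relations (exactly as in the proof of Proposition~\ref{prop:conn}), its image in $P_{(0,-)}$ is a scalar multiple of the tangle $S(i)$ for whatever label $s_i$ appears in the external black region of $T$ — or, if the external black region is empty, a scalar multiple of $1^{(0,-)} = \sum_i S(i)$, hence a combination of the $S(i)$. If a non-external black region of $T$ is inconsistently labelled, the multiplication relation of Figure~\ref{fig:mult} forces the image to be zero, and likewise if the external region is inconsistently labelled. In all cases the image of every basis element of $P(L)_{(0,-)}$ lies in the span of $\{[S(1)], \ldots, [S(n)]\}$, so $dim(P_{(0,-)}) \leq n$.

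For the lower bound $dim(P_{(0,-)}) \geq n$, I would invoke the functionals $\lambda_{-,k}$. By the argument in the proof of Proposition~\ref{prop:conn} — repeating the verification of invariance under each relation in $R$, now using parts (c) and (d) of Lemma~\ref{lemma:multiplicativity} together with the identity $\lambda_{-,k}(U) = \sqrt{n}\,\lambda_+(T(k)\circ U)$ — each $\lambda_{-,k}$ vanishes on $I(R)_{(0,-)}$ and hence descends to a linear functional $\bar\lambda_{-,k}$ on $P_{(0,-)}$. Evaluating on the elements $S(i)$ gives $\bar\lambda_{-,k}(S(i)) = \delta_{ik}$ (directly from the definition of $\lambda_{-,k}$, since $S(i)$ has one loop, no empty non-external region, and its external region carries $s_i$), so the $n$ functionals $\bar\lambda_{-,k}$ separate the $n$ classes $[S(i)]$ and these classes are linearly independent in $P_{(0,-)}$. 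Combined with the upper bound this yields $dim(P_{(0,-)}) = n$, with $\{[S(1)], \ldots, [S(n)]\}$ a basis.

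The main obstacle, as in Proposition~\ref{prop:conn}, is checking that $\lambda_{-,k}$ really does annihilate $I(R)_{(0,-)}$: one must reconsider the spanning set of $I(R)_{(0,-)}$, write each element as $A$ composed with the left-hand side of a relation, and verify invariance relation-by-relation. Fortunately the reduction via Lemma~\ref{lemma:multiplicativity}(c),(d) is uniform and the verifications are essentially identical to the $(0,+)$ case already carried out — indeed the cleanest route is to bypass re-doing the black channel analysis by observing that $\lambda_{-,k} = \sqrt{n}\,\lambda_+ \circ Z_{T(k)}$ and that $Z_{T(k)}$ carries $I(R)_{(0,-)}$ into $I(R)_{(0,+)}$ (since $T(k)$ is a legitimate annular tangle and $I(R)$ is a planar ideal), so the already-proved vanishing of $\lambda_+$ on $I(R)_{(0,+)}$ does all the work. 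This observation makes the corollary an almost immediate consequence of Proposition~\ref{prop:conn}.
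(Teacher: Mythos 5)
Your proof is correct and follows essentially the same route as the paper: the spanning argument is identical, and the ``cleanest route'' you settle on for independence --- that $\lambda_{-,k} = \sqrt{n}\,\lambda_+\circ Z_{T(k)}$ with $Z_{T(k)}$ carrying $I(R)_{(0,-)}$ into $I(R)_{(0,+)}$ --- is precisely the paper's argument, which applies $T(k)$ to $\sum_i \alpha_i S(i)$ and evaluates $\lambda_+$ to get $\frac{1}{\sqrt{n}}\alpha_k$. One small slip in a parenthetical: $S(i)$ has no closed loops at all (it is a single labelled $(0,-)$-box sitting in the black external region, so $E(S(i))=N(S(i))=0$), not ``one loop''; your stated value $\lambda_{-,k}(S(i))=\delta_{ik}$ is nevertheless correct.
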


\begin{proof}
As in the proof of Proposition \ref{prop:conn}, by induction on the number of loops, it is clear that the image of a basis element of $P(L)_{(0,-)}$ in $P_{(0,-)}$ is either a multiple of $S(i)$ or of $1_{(0,-)}$ (according as its external region has a box labelled $s_i$ or is empty). Now, using Lemma \ref{lemma:unitmodulus}, it is clear that the images of the $S(i)$ in $P_{(0,-)}$ are a spanning set.

To show linear independence, suppose that $\sum_i \alpha_i S(i)$ is in $I(R)_{(0,-)}$. Applying the annular tangle $T(k)$ of Figure \ref{fig:tk} to this must therefore yield an element of $I(R)_{(0,+)}$. Hence a further application of $\lambda_+$ should give $0$ by Proposition \ref{prop:conn}. However, a direct calculation using the definition of $\lambda_+$
gives $\frac{1}{\sqrt{n}}\alpha_k$. Thus, all the $\alpha_i$ must vanish.
\end{proof}

\begin{corollary}
The planar algebra $P$ has modulus $\sqrt{n}$.
\end{corollary}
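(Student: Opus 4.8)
The plan is to reduce the statement to two elementary computations that are already within reach, and then to propagate them to arbitrary loops. A planar algebra has modulus $\delta$ exactly when the two ``circle'' operations --- adjoining to a tangle a contractible closed loop that encloses none of its internal boxes --- both multiply the associated multilinear map by $\delta$; and there are precisely two such operations, according as the region of the tangle in which the loop is placed is unshaded or shaded. Accordingly, I would first single out the two relevant elementary tangles: the $(0,+)$-tangle $C_{+}$ that is a single shaded-interior loop, and the $(0,-)$-tangle $C_{-}$ obtained by adjoining a single unshaded-interior loop inside the shaded annulus of $1^{(0,-)}$; and then show that $Z_{C_{+}} = \sqrt{n}\,1^{(0,+)}$ and $Z_{C_{-}} = \sqrt{n}\,1^{(0,-)}$ in $P$.

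The first equality is precisely the black modulus relation on the right of Figure \ref{fig:unit2}, which was shown in Lemma \ref{lemma:unitmodulus} to lie in $I(R)$; it is meaningful (not an instance of $0=0$) in $P$ because $1^{(0,+)} \neq 0$ there, by Proposition \ref{prop:conn}. The second follows from the white modulus relation of Figure \ref{fig:modulus} --- a defining relation, hence valid in $P$ --- applied inside the shaded annulus of $1^{(0,-)}$; alternatively, it can be deduced from the unit relation of Figure \ref{fig:unit2}, which expresses $1^{(0,-)}$ as a combination of the $S(i)$, together with the white modulus relation applied to each $S(i)$, the point being that $S(i)$ has a single shaded region. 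Here too the relation is meaningful since $1^{(0,-)} \neq 0$ in $P$ by Corollary \ref{cor:conn}, as $n \geq 1$.

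Finally, given any tangle $T$ and a contractible loop $\ell$ in it enclosing no internal box, I would isotope $T$ so that $\ell$ bounds a small disk $D$ whose interior meets $T$ only in $\ell$ (if several such loops are present, treat them one at a time, innermost first), excise $D$, and insert in its place a new internal $(0,\pm)$-box $B$ whose colour matches the shading of the region of $T$ surrounding $D$; this exhibits $T$ as the composite $T' \circ_{B} C_{\pm}$. Since $Z_{C_{\pm}} = \sqrt{n}\,1^{(0,\pm)}$ and substituting the empty tangle $1^{(0,\pm)}$ into $B$ merely deletes $B$ --- returning $T$ with $\ell$ removed --- one gets $Z_{T} = \sqrt{n}\,Z_{T \setminus \ell}$, and induction on the number of such loops finishes the argument. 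The genuine content was already supplied by Proposition \ref{prop:conn} and Corollary \ref{cor:conn}, which keep $P$ from collapsing and so keep the two modulus relations from being vacuous; the only matters needing a little care --- the localisation of an arbitrary loop onto one of the two circle tangles, and the bookkeeping of the shaded/unshaded alternation --- are routine, so I do not anticipate a real obstacle.
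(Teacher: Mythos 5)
Your argument is correct and is essentially the paper's own proof: both rest on non-collapse (Proposition \ref{prop:conn} and Corollary \ref{cor:conn}) to keep the modulus relations from being vacuous, and then read off the value $\sqrt{n}$ from the white modulus relation of Figure \ref{fig:modulus} and the derived black modulus relation of Figure \ref{fig:unit2} established in Lemma \ref{lemma:unitmodulus}. The paper merely leaves implicit the routine localisation of an arbitrary contractible loop onto the two elementary circle tangles, which you spell out.
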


\begin{proof}
The planar algebra $P$ does not collapse by Proposition \ref{prop:conn} and the white modulus relation of Figure \ref{fig:modulus} and the black modulus relation of Figure \ref{fig:unit2} show that $P$ has modulus
$\sqrt{n}$.
\end{proof}

To show that $P$ is a $*$-planar algebra, we will appeal to the following general simple lemma whose proof we omit.

\begin{lemma} Let $P = P(L,R)$ for some label set $L$ and some set of relations $R$ in $P(L)$.
Suppose that $L$ is equipped with an involution $*$ (by which we mean that each $L_{k,\epsilon)}$ is) such that for every relation in $R$, its adjoint
is also in the planar ideal generated by $R$. Then $P$ has a natural $*$-planar algebra structure.\qed
\end{lemma}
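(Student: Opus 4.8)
The plan is to first recall the standard $*$-structure carried by the universal planar algebra $P(L)$, then show that the hypothesis forces the planar ideal $I(R)$ to be invariant under this $*$, and finally observe that a quotient by a $*$-invariant planar ideal is again a $*$-planar algebra.

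First I would equip $P(L)$ with its natural $*$-planar algebra structure. Recall that a basis of $P(L)_{(k,\epsilon)}$ is given by the $L$-labelled $(k,\epsilon)$-tangles (in a fixed standard form), and that every planar tangle $T$ has an \emph{adjoint} tangle $T^*$ obtained by reflecting $T$ in a line; reflection preserves the colour of each box and induces a bijection between the internal boxes of $T$ and those of $T^*$. Define $*$ on a basis element -- an $L$-labelled tangle -- by reflecting the underlying tangle and replacing each label $\ell$ by $\ell^*$, and extend conjugate-linearly to all of $P(L)_{(k,\epsilon)}$. A routine verification, which is carried out in \cite{KdySnd2004} for the older formalism and is unchanged here, shows that this is a conjugate-linear involution satisfying the compatibility $Z_{T^*}^{P(L)}(x_1^* \otimes \cdots \otimes x_b^*) = \big(Z_T^{P(L)}(x_1 \otimes \cdots \otimes x_b)\big)^*$ after the relabelling of boxes induced by reflection; that is, $P(L)$ is a $*$-planar algebra.

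Next I would show that $I(R)^* = I(R)$. Recall that $I(R)_{(k,\epsilon)}$ is spanned by the elements $Z_T^{P(L)}(x_1 \otimes \cdots \otimes x_b)$ in which $T$ is a $(k,\epsilon)$-tangle, some one input $x_i$ lies in $R$, and the remaining $x_j$ are basis elements of the appropriate $P(L)_{(k_j,\epsilon_j)}$; moreover, being a planar ideal, $I(R)$ has the stronger property that $Z_T^{P(L)}(y_1 \otimes \cdots \otimes y_b) \in I(R)$ whenever some $y_i \in I(R)$ and the other $y_j$ are arbitrary elements of $P(L)$. Now take a spanning element $z = Z_T^{P(L)}(x_1 \otimes \cdots \otimes x_b)$ as above with $x_i = r \in R$. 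Applying the $*$ of $P(L)$ and using the compatibility just recorded, $z^*$ equals $Z_{T^*}^{P(L)}(x_1^* \otimes \cdots \otimes x_b^*)$ up to the reordering of inputs induced by reflection. By hypothesis $r^* \in I(R)$, so one input of this expression lies in $I(R)$ while the others lie in $P(L)$; hence $z^* \in I(R)$ by the planar-ideal property. Since such $z$ span $I(R)_{(k,\epsilon)}$ and $*$ is conjugate-linear, $I(R)^* \subseteq I(R)$, and equality follows because $*$ is an involution.

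Finally, since $I(R)$ is $*$-invariant, the formula $[x]^* := [x^*]$ is well defined on $P_{(k,\epsilon)} = P(L)_{(k,\epsilon)}/I(R)_{(k,\epsilon)}$; it is again a conjugate-linear involution, and the compatibility with the operad action descends to the quotient directly from that on $P(L)$, so $P$ is a $*$-planar algebra. I expect the only real work to be the bookkeeping in the first step -- pinning down the reflection convention and the induced permutation of boxes in the current $(k,\epsilon)$-indexed formalism -- but this is entirely parallel to \cite{KdySnd2004}; the ideal-invariance argument of the second step is the conceptual heart and is short.
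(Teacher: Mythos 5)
Your proof is correct and is the standard argument the authors evidently had in mind: the paper states this lemma with its proof omitted, and your three steps (the reflection $*$ on $P(L)$, the $*$-invariance of $I(R)$ via the planar-ideal closure property applied to $r^*\in I(R)$, and descent to the quotient) supply exactly the expected reasoning. No gaps.
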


\begin{corollary}\label{cor:*pl}
$P$ is a $*$-planar algebra.
\end{corollary}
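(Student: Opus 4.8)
The plan is to apply the general $*$-planar algebra lemma stated just above, so the only real work is to exhibit an involution $*$ on the label set $L$ and to check that the adjoint of each relation in $R$ lies in the planar ideal $I(R)$. Recall that $L$ is concentrated in colour $(0,-)$, with $L_{(0,-)} = S = \{s_1,\dots,s_n\}$, and that the paper has already equipped $L$ with the \emph{identity} involution $*$; so $s_i^* = s_i$ for every $i$. Thus the first step is simply to record that this is the involution we use, and that all the relations in $R$ live in colours $(0,\pm)$ and $(2,+)$.

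Next I would compute the adjoint of each relation in Figures \ref{fig:modulus} and \ref{fig:mult}. The adjoint operation on tangles reflects them (say, top-to-bottom), reverses the roles of input and output discs, and conjugates scalars; since all the scalars appearing ($\sqrt{n}$, $1/\sqrt{n}$, $\delta_{ij}$) are real, conjugation does nothing. For the white and black modulus relations of Figure \ref{fig:modulus}: a closed white (resp. black) loop is carried by the adjoint to a closed white (resp. black) loop, and the relevant $(0,-)$-boxes labelled $s_i$ go to $(0,-)$-boxes labelled $s_i^* = s_i$; so each of these relations is \emph{visibly equal} (not merely equivalent modulo $I(R)$) to its own adjoint, hence trivially its adjoint lies in $I(R)$. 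The same is true of the multiplication relation of Figure \ref{fig:mult}: reflecting $W(i,j)$ exchanges the two stacked $(0,-)$-boxes labelled $s_i$ and $s_j$, but the resulting relation is just the original one with $i$ and $j$ interchanged, which is again a member of $R$ (and the right-hand side $\delta_{ij}S(i)$ is symmetric in the relevant sense). So far every adjoint is literally one of the given relations.

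The one case needing a moment's thought is the black channel relation of Figure \ref{fig:mult}, which lives in colour $(2,+)$: here the adjoint of the tangle on the left side need not be that same tangle, because reflecting a $(2,+)$-tangle can genuinely change it. I would argue as follows: the adjoint of the black channel relation is again a relation of the form ``(some $(2,+)$-tangle) $= \frac{1}{\sqrt n}\sum_i(\text{something})$'', and using Lemma \ref{lemma:unitmodulus} (the unit and modulus relations now known to hold in $P$, equivalently to lie in $I(R)$) together with the already-established modulus relations, one can rewrite the adjointed left-hand side back into the original black channel tangle — essentially because the adjoint of the black channel tangle differs from the original only by a rotation/cap-cup manipulation that is absorbed by the modulus relations. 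Concretely, capping and using the black modulus relations (exactly as in the proof of Lemma \ref{lemma:unitmodulus}) shows the adjoint relation is a consequence of the black channel relation plus modulus relations, hence lies in $I(R)$.

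The main obstacle, then, is precisely this last verification: keeping careful track of what reflection does to the $(2,+)$ black channel tangle and confirming that the discrepancy is killed inside $I(R)$. Everything else is immediate from the identity involution and the symmetry of the pictures. Once all adjoints of relations in $R$ are shown to lie in $I(R)$, the quoted lemma applies verbatim and endows $P = P(L,R)$ with a $*$-planar algebra structure, completing the proof. \qed
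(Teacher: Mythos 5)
Your proposal is correct and takes essentially the same route as the paper, which simply invokes the general lemma and observes by inspection that every relation in $R$ (with the identity involution on $L$) is invariant under the adjoint. Your extra caution about the black channel relation is harmless but unnecessary: that $(2,+)$-relation is also visibly equal to its own reflection, so no appeal to Lemma \ref{lemma:unitmodulus} is needed.
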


\begin{proof}
 Inspection  shows that each of the relations in $R$ is, in fact, invariant under the involution on $L$  given
 by the identity map.
\end{proof}

Before stating the next lemma, we recall that a Temperley-Lieb tangle is one that has no internal boxes while a pure Temperley-Lieb tangle is one which, in addition, has no closed strings. It is well known - see Proposition 2.8.1 of \cite{GdmHrpJns1989} that each pure Temperley-Lieb tangle is a monomial in the Jones projections.

\begin{lemma}\label{lemma:span}
The elements of $P_{(2m,\pm)}$ and $P_{(2m+1,\pm)}$ shown in Figures \ref{fig:bases1} and \ref{fig:bases2}
are spanning sets.
\end{lemma}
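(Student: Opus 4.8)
The plan is to prove Lemma~\ref{lemma:span} by reducing an arbitrary labelled tangle to one of the asserted normal forms using only the relations in $R$ together with their consequences (Lemma~\ref{lemma:unitmodulus}, Proposition~\ref{prop:conn}, and its corollaries). First I would recall that a general $L$-labelled $(k,\epsilon)$-tangle $T$ can be written as $Z_{T_0}(y_1 \otimes \cdots \otimes y_r)$ where $T_0$ is an (unlabelled) tangle whose internal boxes are all of colour $(0,-)$ and $y_1, \ldots, y_r \in S$; this is simply the statement that all labels in $P(L)$ live in colour $(0,-)$ and that, after isotopy, each labelled box can be assumed to sit in a small disc connected to the outer boundary by the skeleton of $T_0$. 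Thus it suffices to analyse the underlying unlabelled tangle together with the combinatorial data of which string/region carries which label.

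Next I would split $T_0$ into its Temperley-Lieb part and deal with closed loops first: every closed loop bounds a region which, by the white or black modulus relation of Figures~\ref{fig:modulus} and~\ref{fig:unit2}, can be removed at the cost of a scalar $\sqrt{n}^{\pm 1}$ (a black loop with a label inside it is handled using Corollary~\ref{cor:conn}, i.e.\ the $S(i)$ span $P_{(0,-)}$, so a label trapped in a contractible black bubble just contributes a scalar). After clearing closed strings we are left with a pure Temperley-Lieb diagram carrying some labels in its black regions. Now I would invoke Proposition~2.8.1 of \cite{GdmHrpJns1989}: the pure Temperley-Lieb tangle is a monomial in the Jones projections $e_1, e_2, \ldots$. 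The key move is that whenever the diagram has a ``cup-cap'' pair creating an internal black disc, the multiplication relation of Figure~\ref{fig:mult} (which lets a product of two $S(i)$-boxes collapse to $\delta_{ij} S(i)$) and the black channel relation (which lets a single $S(i)$ with a through-string be replaced by $\tfrac{1}{\sqrt n}$ times a bare through-string, cf.\ Figure~\ref{fig:unit3}) combine to push labels out of nested positions and amalgamate or delete them. Working from the innermost nesting outward — formally by induction on the number of Jones-projection factors, exactly as in the proof of Proposition~\ref{prop:conn} — one shows that any such element is a scalar multiple of one of the ``standard'' tangles depicted in Figures~\ref{fig:bases1} and~\ref{fig:bases2}, namely the one with no cups or caps except the minimal ones forced by the colour, and with one free label $s_{i_1}, \ldots, s_{i_m}$ (and $s_{j_1}, \ldots, s_{j_m}$) in each of the $k = 2m$ (or $2m+1$) outermost black regions adjacent to the boundary. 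Finally one observes that the number of such standard tangles is exactly $n^k$ (one label choice per boundary black region), matching the claimed dimension bound, so these elements do indeed span $P_{(k,\pm)}$.

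The main obstacle I expect is the bookkeeping in the reduction of the pure Temperley-Lieb part: one must check that \emph{every} cup/cap configuration, and in particular the various ways labels can be distributed among black regions that get merged when a Jones projection is simplified, really is covered by some combination of the multiplication relation, the black channel relation, and the modulus relations — there is genuine case analysis here, parallel to (but more involved than) the six-case analysis around Figure~\ref{fig:Ttilde} in the proof of Proposition~\ref{prop:conn}. A secondary subtlety is keeping track of colours and of the distinguished external region so that the output is one of the specific pictures in Figures~\ref{fig:bases1}--\ref{fig:bases2} rather than merely ``some standard-looking tangle''; this is where the even/odd distinction and the precise placement of the $s_p$, $s_q$ labels in those figures must be matched up carefully. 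Since the lemma only claims \emph{spanning}, we need not verify linear independence here — that will presumably follow later from the dimension count and the functionals $\lambda_\pm$, just as linear independence of the $S(i)$ followed in Corollary~\ref{cor:conn}.
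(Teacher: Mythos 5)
Your overall strategy coincides with the paper's: strip closed loops with the modulus relations, reduce to a Temperley--Lieb tangle carrying labelled $(0,-)$-boxes, invoke Proposition 2.8.1 of \cite{GdmHrpJns1989} to reduce the pure Temperley--Lieb part to Jones projections, and finish with the multiplication and black channel relations. However, there are two genuine gaps. First, you misquote the relations you rely on: a \emph{single} $S(i)$ sitting on a through-string cannot be replaced by $\frac{1}{\sqrt n}$ times a bare string --- that would collapse the planar algebra. Both the black channel relation of Figure \ref{fig:mult} and the derived unit relation of Figure \ref{fig:unit3} involve a \emph{sum} over $i$; the usable statement is that $\sum_i S(i)$ acts as the identity on a black region. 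As a consequence, your conclusion that every reduced tangle is a \emph{scalar multiple} of a single standard tangle is false: for instance the identity of $P_{(2m,+)}$ carries no labels and is a sum of $n^m$ elements of ${\mathcal B}_{(2m,+)}$, not a multiple of one. The correct mechanism --- and the step missing from your argument --- is to \emph{insert} $\sum_i S(i)$ into every unlabelled black region (unit relation) and then amalgamate multiple boxes in a single region via the multiplication relation, producing a linear combination indexed by the labels of the boundary black regions. Second, your closing appeal to ``the number of standard tangles is $n^k$, matching the claimed dimension bound'' is circular: no upper bound on $\dim P_{(k,\pm)}$ is available at this stage (the dimension count is a \emph{consequence} of spanning plus the later linear-independence argument), so spanning must follow from the reduction alone.

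A smaller organizational point: the paper's route through the Jones projections is cleaner than your ``push labels outward by induction on the number of Jones-projection factors.'' After arranging that every black region carries exactly one label, it factors the tangle as a product (top)$\cdot$(pure Temperley--Lieb)$\cdot$(bottom) with top and bottom of the form in Figure \ref{fig:topbottom}, observes that the span of ${\mathcal B}_{(2m,+)}$ is closed under multiplication (Lemma \ref{lemma:notation}), and thereby reduces the whole problem to checking that the individual Jones projections lie in that span --- which is where the black channel relation is actually used. Your version would need the case analysis you yourself flag as the main obstacle; the multiplicative factorization avoids most of it. You do correctly defer linear independence, and the reduction of the $(k,-)$ cases to the $(k,+)$ cases by rotation (which the paper notes explicitly) should also be stated.
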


\begin{proof}
It suffices to see that ${\mathcal B}_{(2m,+)}$ and ${\mathcal B}_{(2m+1,+)}$ - the pictures on top in Figures \ref{fig:bases1} and \ref{fig:bases2} - are spanning sets
of $P_{(2m,+)}$ and $P_{(2m+1,+)}$ respectively since ${\mathcal B}_{(2m,-)}$ and ${\mathcal B}_{(2m+1,-)}$ are obtained
by a rotation from these, and rotation implements an isomorphism from $P_{(k,+)}$ to $P_{(k,-)}$ for $k > 0$.

To show that ${\mathcal B}_{(2m,+)}$ spans $P_{(2m,+)}$, begin with an
arbitrary basis element of $P(L)_{(2m,+)}$. This is an $L$-labelled $(2m,+)$-tangle. Using the modulus relations,
we may assume that this tangle has no closed loops in it. Thus it is a Temperley-Lieb tangle with some of its black regions having some labelled $(0,-)$-boxes. Using the unit relation of Figure \ref{fig:unit3} together with the multiplication relation, we may express this as a linear combination of Temperley-Lieb tangles each of whose black regions has a single labelled $(0,-)$-box and it now suffices to see that any such tangle is in the 
span of ${\mathcal B}_{(2m,+)}$.

Observe now that any Temperley-Lieb tangle each of whose black regions has a single labelled $(0,-)$-box
 can  be expressed as a product of three tangles, the middle one of which is a pure Temperley-Lieb tangle and the other two being tangles as in Figure \ref{fig:topbottom}.
 \begin{figure}[!h]
\begin{center}
\psfrag{si1}{\Huge $s_{i_1}$}
\psfrag{s12}{\Huge $s_{i_2}$}
\psfrag{sim}{\Huge $s_{i_m}$}
\psfrag{sj1}{\Huge $s_{j_1}$}
\psfrag{sj2}{\Huge $s_{j_2}$}
\psfrag{sjk}{\Huge $s_{i_{m+1}}$}
\psfrag{simm1}{\Huge $s_{i_{3}}$}
\psfrag{simp1}{\Huge $s_{j_{m}}$}
\psfrag{sjkm1}{\Huge $s_{j_{m}}$}
\psfrag{cdots}{\Huge $\displaystyle{\cdots}$}
%\psfrag{zab}{\huge $\zeta a + b$}
%\psfrag{eq}{\huge $=$}
\psfrag{fi}{\Huge $s_i$}
\psfrag{fj}{\Huge $s_j$}
\psfrag{cdots}{\Huge $\displaystyle{\cdots}$}
\psfrag{=}{\Huge $=$}
\psfrag{=dij}{\bf{\Huge $=\delta_{ij}$}}
\resizebox{8.0cm}{!}{\includegraphics{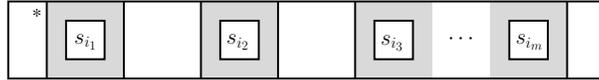}}
\end{center}
\caption{Form of the top and bottom tangles}
\label{fig:topbottom}
\end{figure}
A little thought now shows that it suffices to verify that a pure Temperley-Lieb tangle is in the span of ${\mathcal B}_{(2m,+)}$.

It is clear that the span of ${\mathcal B}_{(2m,+)}$ is closed under multiplication and so we reduce to showing that the Jones projections
are in the span of ${\mathcal B}_{(2m,+)}$.  Using the black channel relation and the unit relation, this fact is obvious for the even Jones
projections. A little calculation shows that this is also true for the odd Jones projections again using the black channel relation ``horizontally"
along with more applications of the unit and black channel relations.

A similar proof works to show that ${\mathcal B}_{(2m+1,+)}$ spans $P_{(2m+1,+)}$.
\end{proof}

Before we state the next lemma, we will introduce the following notation for certain elements in $P$. We denote $(\sqrt{n})^{m}$ times the elements on the top and bottom in Figure \ref{fig:bases1} by $e^{i_1\cdots i_m}_{j_1\cdots j_m}$ and $e[p)^{i_1 \cdots i_{m}}_{j_1 \cdots j_m}(q]$ respectively. Similarly, we denote $(\sqrt{n})^{m}$ times the
elements on the top and bottom in Figure \ref{fig:bases2} by $e^{i_1\cdots i_m}_{j_1\cdots j_m}(q]$ and $e[p)^{i_1 \cdots i_{m}}_{j_1 \cdots j_m}$. We omit the proof of the following lemma which, among other things, justifies these notations.

\begin{lemma}\label{lemma:notation}
%Notation, multiplication and trace of the basis elements. Behaviour under the inclusion, conditional expectations and rotation.
The following relations hold in $P$.
\begin{eqnarray*}
e^{i_1\cdots i_m}_{j_1\cdots j_m}.e^{k_1\cdots k_m}_{l_1\cdots l_m}&=& \delta_{j_1k_1}\cdots \delta_{j_mk_m} e^{i_1\cdots i_m}_{l_1\cdots l_m}\\
e[p)^{i_1 \cdots i_{m}}_{j_1 \cdots j_m}(q].e[r)^{k_1 \cdots k_{m}}_{l_1 \cdots l_m}(s]&=& \delta_{pr} \delta_{j_1k_1} \cdots \delta_{j_mk_{m}}\delta_{qs} e[p)^{i_1 \cdots i_{m}}_{l_1 \cdots l_m}(s]\\
e^{i_1\cdots i_m}_{j_1\cdots j_m}(q].e^{k_1\cdots k_m}_{l_1\cdots l_m}(s] &=& 
\delta_{j_1k_1}\cdots \delta_{j_mk_m}\delta_{qs} e^{i_1\cdots i_m}_{l_1\cdots l_m}(q]\\
e[p)^{i_1 \cdots i_{m}}_{j_1 \cdots j_m}.e[r)^{k_1 \cdots k_{m}}_{l_1 \cdots l_m} &=& \delta_{pr}\delta_{j_1k_1} \cdots \delta_{j_mk_{m}}e[p)^{i_1 \cdots i_{m}}_{l_1 \cdots l_m}.
\end{eqnarray*}
%FINISH STATEMENT OF LEMMA
\end{lemma}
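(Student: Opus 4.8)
The statement to prove is Lemma~\ref{lemma:notation}, asserting the four multiplication formulas among the elements $e^{\vec\imath}_{\vec\jmath}$, $e[p)^{\vec\imath}_{\vec\jmath}(q]$, $e^{\vec\imath}_{\vec\jmath}(q]$, $e[p)^{\vec\imath}_{\vec\jmath}$ in $P$.

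My plan is to prove all four identities by the same method: compute the composite tangle geometrically and reduce it to the claimed right-hand side using only the relations already established, namely the modulus relations (Figures~\ref{fig:modulus} and \ref{fig:unit2}), the multiplication relation of Figure~\ref{fig:mult}, the black channel relation, and the unit relations of Figures~\ref{fig:unit3} and \ref{fig:unit2}. First I would set up the picture for the first identity: the element $e^{i_1\cdots i_m}_{j_1\cdots j_m}$ is (up to the factor $(\sqrt n)^m$) the $(2m,+)$-tangle pictured in Figure~\ref{fig:bases1}, having $m$ ``cups'' on top carrying labels $s_{i_1},\dots,s_{i_m}$ and $m$ ``caps'' on the bottom carrying $s_{j_1},\dots,s_{j_m}$, arranged so that each label sits in its own black region. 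Stacking $e^{\vec\imath}_{\vec\jmath}$ on top of $e^{\vec k}_{\vec l}$ in the multiplication tangle of $P_{(2m,+)}$ produces, in the middle strip, $m$ small circles, the $r$-th of which has a black region containing both $s_{j_r}$ and $s_{k_r}$. Applying the multiplication relation of Figure~\ref{fig:mult} to each such black region replaces it by $\delta_{j_r k_r}$ times a single box labelled $s_{j_r}$, and then the black modulus relation of Figure~\ref{fig:unit2} removes the resulting closed loop at the cost of a factor $\sqrt n$. Carrying this out for all $m$ circles yields $\delta_{j_1k_1}\cdots\delta_{j_mk_m}(\sqrt n)^m$ times the tangle $e^{\vec\imath}_{\vec l}$ divided once by the normalizing power — and tracking the $(\sqrt n)^m$ normalizations built into the definitions of the $e$'s shows the powers of $\sqrt n$ match exactly, giving the first formula.

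The remaining three identities are handled identically, the only extra bookkeeping being the two ``through strings'' that carry the extra labels $s_p,s_q$ (equivalently, the extra indices $p,q,r,s$). In the second identity, the leftmost through string of $e[p)^{\vec\imath}_{\vec\jmath}(q]$ meets the leftmost through string of $e[r)^{\vec k}_{\vec l}(s]$, producing a black region that, after the multiplication and black-modulus relations, contributes $\delta_{pr}$; symmetrically the rightmost strings contribute $\delta_{qs}$; and the $m$ internal circles contribute $\delta_{j_1k_1}\cdots\delta_{j_mk_m}$ and the surviving power of $\sqrt n$ exactly as before. For the third and fourth identities (the odd-colour cases of Figure~\ref{fig:bases2}), there is only one extra through string, so only one of $\delta_{pr}$ or $\delta_{qs}$ appears, and the black channel relation is needed once to resolve the half-box adjacent to the outer boundary; again the loop count and the normalization powers are checked to agree. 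In each case one may equivalently verify the identity by applying a suitable annular tangle and the functionals $\lambda_+,\lambda_{-,i}$ of the previous section together with their multiplicativity (Lemma~\ref{lemma:multiplicativity}), but the direct pictorial computation is cleaner.

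The only real point requiring care — and hence the main obstacle — is the $\sqrt n$ bookkeeping: each definition of an $e$-symbol absorbs a factor $(\sqrt n)^m$, each black-modulus application in the composite removes one closed loop with a factor $\sqrt n$, and one must confirm that the number of closed loops produced (equal to the number $m$ of internal circles, independent of how many Kronecker deltas survive, because when a $\delta$ is $0$ the whole term vanishes anyway) is exactly what is needed to turn $(\sqrt n)^{2m}$ worth of normalization into the single $(\sqrt n)^m$ carried by the right-hand side. Since the product of $e$-symbols on the right also carries its own $(\sqrt n)^m$, the net factor is $(\sqrt n)^{2m}/(\sqrt n)^m = (\sqrt n)^m$, matching the $m$ removed loops. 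Once this is checked in the first case it is mechanical to transcribe to the other three, so I would present the first identity in full detail and remark that the others follow in the same way, which is why the authors say the proof is omitted.
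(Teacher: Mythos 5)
The paper gives no proof of this lemma --- the authors explicitly omit it as routine --- and your direct pictorial computation (stack the two tangles, collapse each black region now containing two labelled boxes via the multiplication relation of Figure \ref{fig:mult}, remove the $m$ resulting closed loops, and balance the powers of $\sqrt{n}$ against the $(\sqrt{n})^m$ normalisations) is exactly the computation being omitted, and it is correct. Two small points of precision: the relation that removes a closed loop enclosing a single labelled box is the black modulus relation of Figure \ref{fig:modulus}, which contributes a factor of $1/\sqrt{n}$ per loop (this is what your final count $(\sqrt{n})^{2m}/(\sqrt{n})^{m}=(\sqrt{n})^{m}$ actually uses, so the earlier phrase ``at the cost of a factor $\sqrt{n}$'' should be stated as division); and in the cases involving $[p)$ and $(q]$ the extra through strings merely concatenate, so those black regions are disposed of by the multiplication relation alone --- no closed loop forms, no modulus factor appears, and the black channel relation is not needed anywhere in this lemma.
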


%Just as Lemma \ref{lemma:notation} specifies the action of the multiplication tangle, the actions of the inclusion,
%conditional expectation and rotation tangles can also be written out explicitly (and determine the planar algebra structure
%completely). 

%\begin{lemma}\label{lemma:inclcondexprot}
%\end{lemma}

%Finally, in order to see that $P$ is a $C^*$-planar algebra, we will define traces on $P_(k,\pm)$ and observe that they are positive. The picture trace tangles $TR^{(0,\pm)}$ define traces from $P_{(k,\pm)}$ to $P_{(0,\pm)}$. Define $\tau$ on $P_{(k,\pm)}$ by $\tau(x)1^{(0,\pm)} = (\sqrt{n})^{-k}Z_{TR^{(0,\pm)}}(x)$

To see that ${\mathcal B}_{(2m,\pm)}$ and ${\mathcal B}_{(2m+1,\pm)}$ are bases of 
 $P_{(2m,\pm)}$ and $P_{(2m+1,\pm)}$ respectively, we consider the natural traces on $P_{(k,\pm)}$ defined using the
 picture trace tangles and normalised appropriately. Explicitly, define the normalised trace $\tau$ on $P_{(k,+)}$ by $\tau(x)1^{(0,+)} = (\sqrt{n})^{-k}Z_{TR^{(0,+)}}(x)$.
Since $P_{(0,-)}$ is $n$-dimensional with basis $\{S(i): i=1,2\cdots,n\}$, define $\tau$ on $P_{(k,-)}$ by $\tau(x) = (\sqrt{n})^{-k}tr(Z_{TR^{(0,-)}}(x))$, where $tr: P_{(0,-)} \rightarrow {\mathbb C}$ is defined by $tr(S(i)) = n^{-1}$ for every $i=1,\cdots,n$.
Simple calculation using the relations shows that the normalised trace of each of the basis elements $e^{i_1\cdots i_m}_{j_1\cdots j_m}$, $e[p)^{i_1 \cdots i_{m}}_{j_1 \cdots j_m}(q]$, $e^{i_1\cdots i_m}_{j_1\cdots j_m}(q]$ and $e[p)^{i_1 \cdots i_{m}}_{j_1 \cdots j_m}$ is given by $\delta_{i_1j_1}\cdots \delta_{i_mj_m}$ times $n^{-m},n^{-m-2},n^{-m-1}$ and $n^{-m-1}$ respectively.

The following corollary is an immediate consequence of Lemma \ref{lemma:span}  and Lemma \ref{lemma:notation}.
\begin{corollary}\label{cor:onb}
%$(\sqrt{n})^{-k}$
${\mathcal B}_{(k,\pm)}$ 
%(with the obvious meaning) 
are orthogonal bases of 
 $P_{(k,\pm)}$. For each basis element $e$, $\tau(e^*e) > 0$.
\end{corollary}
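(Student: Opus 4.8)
The plan is to combine the spanning statement from Lemma \ref{lemma:span} with the multiplication rules of Lemma \ref{lemma:notation} and the explicit trace computations recorded just above. First I would observe that, by Lemma \ref{lemma:span}, the indicated families ${\mathcal B}_{(k,\pm)}$ span the respective vector spaces $P_{(k,\pm)}$, so the only thing left is linear independence and positivity of $\tau(e^*e)$. For linear independence it suffices, as usual, to exhibit a bilinear form on each $P_{(k,\pm)}$ under which the proposed basis is orthogonal with nonzero diagonal entries; the natural candidate is the sesquilinear form $(x,y) \mapsto \tau(y^*x)$ coming from the normalised trace and the $*$-structure (available by Corollary \ref{cor:*pl}).

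Next I would carry out the computation of this form on pairs of basis elements. For concreteness take the even-positive case: using the multiplication rule $e^{i_1\cdots i_m}_{j_1\cdots j_m} \cdot e^{k_1\cdots k_m}_{l_1\cdots l_m} = \delta_{j_1k_1}\cdots\delta_{j_mk_m}\, e^{i_1\cdots i_m}_{l_1\cdots l_m}$ from Lemma \ref{lemma:notation}, together with the fact (from Corollary \ref{cor:*pl}, where the involution is by the identity on labels) that $\left(e^{i_1\cdots i_m}_{j_1\cdots j_m}\right)^* = e^{j_1\cdots j_m}_{i_1\cdots i_m}$, one gets that $\left(e^{k_1\cdots k_m}_{l_1\cdots l_m}\right)^* e^{i_1\cdots i_m}_{j_1\cdots j_m}$ is a Kronecker-delta multiple of a single basis element $e^{l_1\cdots l_m}_{j_1\cdots j_m}$, whose trace was computed above to be $\delta_{l_1j_1}\cdots\delta_{l_mj_m} n^{-m}$. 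Hence the form is diagonal in the basis ${\mathcal B}_{(2m,+)}$ with all diagonal entries equal to $n^{-m} > 0$; in particular the basis is orthogonal and $\tau(e^*e) = n^{-m} > 0$ for every $e \in {\mathcal B}_{(2m,+)}$. The same argument, using the other three multiplication relations and the corresponding trace values $n^{-m-2}$, $n^{-m-1}$, $n^{-m-1}$, handles ${\mathcal B}_{(2m,-)}$, ${\mathcal B}_{(2m+1,+)}$ and ${\mathcal B}_{(2m+1,-)}$, always producing a diagonal form with strictly positive diagonal.

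From orthogonality with nonzero diagonal one concludes linear independence in the standard way: if a linear combination $\sum_e c_e\, e$ of distinct basis elements vanishes, then pairing with a fixed $e_0$ under the form gives $c_{e_0} \cdot (\text{positive number}) = 0$, so every $c_e = 0$. Combined with the spanning from Lemma \ref{lemma:span}, this shows each ${\mathcal B}_{(k,\pm)}$ is an orthogonal basis of $P_{(k,\pm)}$, and the inequality $\tau(e^*e) > 0$ has been checked along the way. I do not anticipate a serious obstacle here: the only slightly delicate point is bookkeeping the $*$-operation and the index conventions in the odd cases and in the $(2m,-)$ case (where the "half-disc" labels $p,q$ enter), but all the needed multiplication identities are exactly the ones recorded in Lemma \ref{lemma:notation}, so the verification is purely a matter of matching indices. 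The main step to get right is therefore just the clean statement that $e^*e'$ is always a scalar multiple of a single basis element whose trace is already known to be positive when $e = e'$.
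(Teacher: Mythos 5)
Your proof is correct and follows essentially the same route as the paper's own (very terse) proof: spanning from Lemma \ref{lemma:span}, then orthogonality and positivity of $\tau(e^*e)$ from the multiplication rules of Lemma \ref{lemma:notation} combined with the recorded trace values, with linear independence deduced from the diagonal, positive-definite form $\tau(e^*e')$. Your version simply makes explicit the index bookkeeping and the adjoint computation that the paper leaves implicit.
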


\begin{proof}
First, Lemma \ref{lemma:notation} and the normalised trace computation show that for each basis element $e$, $\tau(e^*e) > 0$.
Next, by Lemma \ref{lemma:span}, ${\mathcal B}_{(k,\pm)}$ are spanning sets and again by Lemma \ref{lemma:notation} and the normalised trace computation, they are orthogonal and hence linearly independent. 
\end{proof}

Before we prove Theorem \ref{thm:spin} we define what we mean by a $C^*$-planar algebra since this definition does not explicitly appear in the references alluded to earlier.

\begin{definition}
A $*$-planar algebra $P$ is said to be a $C^*$-planar algebra if there exist positive normalised traces
$\tau_{\pm}: P_{(0,\pm)} \rightarrow {\mathbb C}$ such that all the traces $\tau_{\pm} \circ Z_{TR^{(0,\pm)}}$ defined on $P_{(k,\pm)}$ are faithful and positive.
\end{definition}

\begin{proof}[Proof of Theorem \ref{thm:spin}]
Most of the proof of the theorem is contained in Proposition \ref{prop:conn} and Corollary \ref{cor:conn} (for the dimensions of $P_{0,\pm)}$), Corollary \ref{cor:*pl} (for $P$ being a $*$-planar algebra) and Corollary \ref{cor:onb} (for the bases of $P_{(k,\pm)}$). What remains to be seen is that
$P$ is a $C^*$-planar algebra, or equivalently, that the normalised traces defined above are faithful positive traces. This fact also follows easily from Corollary \ref{cor:onb}.
\end{proof}

In our final result we will identify the planar algebra $P$ with the planar algebra of the bipartite graph with one even vertex and $n$ odd vertices, which in turn can be identified with the spin planar algebra. We merely sketch the proof of the identification leaving out most details. These details are routine computations to verify that certain relations hold in the planar algebra of the bipartite graph. A template for such a proof
appears in Proposition 2  of \cite{DeKdy2018}.

Recall that the spin planar algebra is defined in Example 2.8 of \cite{Jns1999} by explicitly specifying its vector spaces and then defining the action of planar tangles using  ``maxima and minima" of strings in the tangle. We will not need the detailed definition in this paper. However what we will need is the identification
of this planar algebra with that of a planar algebra associated to a specific bipartite graph - see Example 4.2
of \cite{Jns2000}.
Recall that for a finite connected bipartite graph $\Gamma$ with vertex set $V = V_+ \coprod V_-$ and edge set $E$, the planar algebra $P(\Gamma)$ of the bipartite graph has vector spaces
given by $P(\Gamma)_{(k,\pm)}$ being the vector space with basis all loops of length $2k$ in $\Gamma$ based at a vertex in $V_\pm$. The description of the action of tangles on these vector spaces requires a choice of spin function $V \rightarrow {\mathbb R}_+$ which we will take to be the co-ordinate-wise square root of a Perron-Frobenius eigenvector for the graph - appropriately normalised. The details of the construction of $P(\Gamma)$ are set out in \cite{Jns2000}. 

\begin{proposition}
Let $\Gamma$ be the bipartite graph in Figure \ref{fig:bipartite} below.
\begin{figure}[!h]
\begin{center}
\psfrag{v1}{\huge $v_1$}
\psfrag{v2}{\huge $v_2$}
\psfrag{vnm1}{\huge $v_{n-1}$}
\psfrag{vn}{\huge $v_{n}$}
\psfrag{w}{\huge $w$}
\psfrag{v}{\Huge $\vdots$}
\resizebox{3.0cm}{!}{\includegraphics{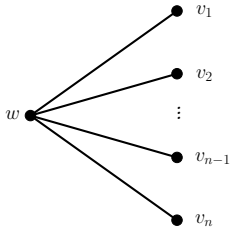}}
\end{center}
\caption{The bipartite graph $\Gamma$}
\label{fig:bipartite}
\end{figure}

\noindent
With $S = \{v_1,\cdots,v_n\}$, the planar algebra $P$
of Theorem \ref{thm:spin} is isomorphic to $P(\Gamma)$ by the map that takes $v_i \in P_{(0,-)}$ to the loop of length 0
based at $v_i$ in $P(\Gamma)_{(0,-)}$.
\end{proposition}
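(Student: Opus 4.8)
The plan is to construct a planar algebra homomorphism $\Phi\colon P \to P(\Gamma)$ and then show it is an isomorphism. Since $P = P(L,R)$ with $L = L_{(0,-)} = S$, the planar algebra $P$ is generated by $P_{(0,-)}$, so $\Phi$ is forced once we specify where the $s_i$ go. First I would invoke the universal property of $P(L)$ to obtain a planar algebra homomorphism $\widetilde\Phi\colon P(L)\to P(\Gamma)$ with $\widetilde\Phi(s_i)$ equal to the length-$0$ loop $[v_i]\in P(\Gamma)_{(0,-)}$ based at $v_i$, and then check that $\widetilde\Phi$ kills the planar ideal $I(R)$, so that it descends to the desired $\Phi\colon P\to P(\Gamma)$. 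Concretely this amounts to verifying that the white and black modulus relations of Figure \ref{fig:modulus}, the multiplication relation, and the black channel relation of Figure \ref{fig:mult} all hold in $P(\Gamma)$ once each $s_i$ is replaced by $[v_i]$.

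These verifications are the routine computations alluded to above; they are carried out by unwinding the definition of the $P(\Gamma)$-action in terms of maxima and minima of strings weighted by the spin function, and they follow the template of Proposition~2 of \cite{DeKdy2018}. The modulus relations reduce to two facts about the star graph $\Gamma$: its graph norm is $\sqrt{n}$, and the spin function — the coordinatewise square root of a Perron--Frobenius eigenvector, which here is proportional to $(\sqrt{n},1,\dots,1)$ with respect to the ordering $(w,v_1,\dots,v_n)$ — takes at $w$ and at the $v_i$ exactly the values needed to reproduce the scalars of Figures \ref{fig:modulus} and \ref{fig:unit2}. The multiplication relation becomes the evident identity $[v_i]\cdot[v_j] = \delta_{ij}[v_i]$. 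The black channel relation requires more care, just as the corresponding invariance did in the proof of Proposition \ref{prop:conn}; I expect this to be the main obstacle, as it is the one relation whose verification genuinely engages the combinatorics of loops in $\Gamma$ and the spin weights, rather than following formally from the universal property.

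Once $\Phi$ is available I would finish with a dimension count. By Theorem \ref{thm:spin}, $\dim P_{(0,+)} = 1$, $\dim P_{(0,-)} = n$ and $\dim P_{(k,\pm)} = n^{k}$ for $k>0$. On the other side, $\dim P(\Gamma)_{(k,\pm)}$ is the number of loops of length $2k$ in $\Gamma$ based in $V_\pm$, and a direct count in the star graph yields the same numbers: for $k=0$, one loop at $w$ and $n$ loops at the $v_i$; for $k>0$, a loop of length $2k$ at $w$ is an arbitrary word of length $k$ in the $v_i$, giving $n^{k}$, while a loop of length $2k$ at a fixed $v_i$ is determined by its $k-1$ interior visits to $V_-$, giving $n\cdot n^{k-1} = n^{k}$ altogether. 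So $P$ and $P(\Gamma)$ have equal finite dimension in every colour, and it is enough to check that $\Phi$ is surjective.

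For surjectivity I would compute $\Phi$ on the spanning set ${\mathcal B}_{(k,\pm)}$ of Lemma \ref{lemma:span}: a short calculation identifies the images of the basis tangles $e^{i_1\cdots i_m}_{j_1\cdots j_m}$, $e[p)^{i_1\cdots i_m}_{j_1\cdots j_m}(q]$, $e^{i_1\cdots i_m}_{j_1\cdots j_m}(q]$ and $e[p)^{i_1\cdots i_m}_{j_1\cdots j_m}$ with nonzero scalar multiples of the standard loop basis elements of $P(\Gamma)_{(k,\pm)}$, in a bijective fashion. Combined with the dimension count, this shows $\Phi$ is bijective in every colour, hence is the required isomorphism; and by construction it sends $v_i\in P_{(0,-)}$ to the length-$0$ loop at $v_i$. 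As an alternative to the explicit computation on ${\mathcal B}_{(k,\pm)}$, one may instead observe that $\Phi$ is a $*$-homomorphism intertwining the canonical faithful positive traces of the two $C^*$-planar algebras — these agree on the $(0,\pm)$ spaces because of Corollary \ref{cor:conn} and the normalisation $\mathrm{tr}(S(i)) = n^{-1}$ — so $\Phi$ is injective, and the dimension count again forces it to be an isomorphism.
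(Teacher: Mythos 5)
Your proposal follows essentially the same route as the paper's (admittedly brief) sketch: a map from the universal planar algebra $P(L)$ to $P(\Gamma)$ given by the universal property, verification that the relations in $R$ hold in $P(\Gamma)$ so that the map descends to $P$, and then a dimension count using Theorem \ref{thm:spin} and the loop count in the star graph to conclude bijectivity. The only cosmetic difference is that the paper gets surjectivity by observing that $P(\Gamma)$ is generated by $P(\Gamma)_{(0,-)}$, whereas you compute the images of the spanning sets ${\mathcal B}_{(k,\pm)}$ directly (and also offer a trace-based injectivity argument); both are fine, and your fleshing out of the Perron--Frobenius data and the loop counts is correct.
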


\begin{proof}[Sketch of proof] There is clearly a map of planar algebras from the universal planar algebra on the label set $L = L_{(0,-)} = \{v_1,\cdots,v_n\}$ to $P(\Gamma)$ defined by the above prescription. This map is surjective since it is easy to see
that the planar algebra $P(\Gamma)$ is generated by $P(\Gamma)_{(0,-)}$.
Using the explicit description of the action
of tangles in $P(\Gamma)$, this map is verified to commute with the action of all generating tangles, thus descending to the quotient $P$. Finally, observing  that the dimensions of
the $(k,\pm)$ spaces on both sides are given by $n^{k}$ (except that the $(0,+)$ spaces have dimension $1$) concludes the proof.
\end{proof}

\section*{acknowledgements}
The third named author gratefully acknowledges the partial support of the Swarnajayanti fellowship grant No. 
DST/SJF/MSA-02/2014-15 of Prof. Amritanshu Prasad.

\end{document}